\documentclass[10pt,reqno]{amsart}
%%%%%%%%%%%%%%%%%%%%%%%%%%%%%%%%%%%%%

%%%%%%%%%%%%%%%%%%%%%%%%%%%%%%%%%%%%%

%--- Packages ---

\usepackage{amsmath,amsfonts,amsthm,amssymb,amsxtra}
\usepackage{amsxtra, amssymb, mathrsfs}
\usepackage[usenames,dvipsnames]{color}

%%%%%%%%
%\usepackage[notref,notcite]{showkeys}
%%%%%%%%%

%--- Page structure ---

\setlength{\voffset}{-.7truein}
\setlength{\textheight}{8.8truein}
\setlength{\textwidth}{6.05truein}
\setlength{\hoffset}{-.7truein}

%\addtolength{\hoffset}{-2cm}
%\addtolength{\textwidth}{4cm}

%--- Theorem structure ---
\newtheorem{theorem}{Theorem}[section]
\newtheorem{proposition}[theorem]{Proposition}

\newtheorem{lemma}[theorem]{Lemma}
\newtheorem{corollary}[theorem]{Corollary}

\theoremstyle{definition}

\newtheorem{assumption}[theorem]{Assumption}

\theoremstyle{remark}
\newtheorem{remark}[theorem]{\bf Remark}
%--- Settings ---

\numberwithin{equation}{section}

%--- Commands and math operators ---

\newcommand{\B}{\mathscr{B}}
\newcommand{\BB}{\mathcal{B}}

\newcommand{\C}{\mathbb{C}}

\newcommand{\G}{\mathcal{G}}
\newcommand{\h}{\mathcal{H}}

\newcommand{\U}{\mathcal{U}}

\newcommand{\pd}{\partial}
\newcommand{\eps}{\varepsilon}

\newcommand{\N}{\mathbb{N}}

\newcommand{\R}{\mathbb{R}}

\newcommand{\w}{\mathcal{W}}
\newcommand{\Z}{\mathbb{Z}}
\newcommand{\rig}{\big\rangle}
\newcommand{\lef}{\big\langle}
\newcommand{\sgn}{\mathop{\mathrm{sign}}}

%%%%%%%%%%%%%%%%%%%%%%%%%%%%%%%%%%%%%%%%%%%%%%%%%%%%%%%%%%%%%%%%%%%%%%%%%%
%%%%%%%%%%%%%%%%%%%%%%%%%%%%%%%%%%%%%%%%%%%%%%%%%%%%%%%%%%%%%%%%%%%%%%%%%%

\begin{document}

\title[Dispersive estimates for Aharonov-Bohm operators]
{Weighted dispersive estimates for two-dimensional Schr\"odinger operators with Aharonov-Bohm magnetic field}

\thanks{G.G. has been partially supported by the MIUR-PRIN 2009 grant ``Metodi di viscosit\`a, geometrici e di controllo per modelli diffusivi nonlineari''. H.K. has been partially supported by the MIUR-PRIN 2010-11 grant for the project  ``Calcolo delle Variazioni''. Both authors acknowledge the support of Gruppo Nazionale per l'Analisi Matematica, la Probabilit\`a   e le loro Applicazioni (GNAMPA) of the Istituto Nazionale di Alta Matematica (INdAM).}

\author {Gabriele Grillo}

\address {Gabriele Grillo, Dipartimento di Matematica, Politecnico di Milano\\
              Piazza Leonardo da Vinci 32, 20133 Milano, Italy\\
              }

\email {gabriele.grillo@polimi.it}

\author {Hynek Kova\v{r}\'{\i}k}

\address {Hynek Kova\v{r}\'{\i}k
              DICATAM, Sezione di Matematica, Universit\`a degli studi di Brescia\\
              Via Branze, 38, 25123 Brescia, Italy}

\email {hynek.kovarik@ing.unibs.it}

%\date{\today}

\begin {abstract}
We consider two-dimensional Schr\"odinger operators $H$ with an Aharonov-Bohm magnetic field and an additional electric potential.
We obtain an explicit leading term of the asymptotic expansion of the unitary group $e^{-i t H}$ for $t\to\infty$ in weighted $L^2-$spaces. In particular, we show that the magnetic field improves the decay of $e^{-i t H}$ with respect to the unitary group of non-magnetic Schr\"odinger operators, and that the decay rate in time is determined by the magnetic flux.

\end{abstract}

\maketitle

%%%%%%%%%%%%%%%%%%%%%%%%%%%%%%%%%%%%%%%%%%%%%%%%%%%%%%%%%%%%%%%%%%%%%%%%%%%%

\section{\bf Introduction}

Long time behavior of propagators of  Schr\"odinger operators is known to be closely related to the spectral properties of their generator near the threshold of the continuous spectrum. For example, if zero is a regular point of a Schr\"odinger operator $-\Delta +V$ in $L^2(\R^n)$ in the sense of \cite{JK}, then in a suitable operator topology
\begin{equation} \label{rn}
e^{-i t (-\Delta+V)} P_{ac} = \mathcal{O}(t^{-n/2}) \qquad t\to \infty,
\end{equation}
provided the electric potential $V:\R^n \to \R$ decays fast enough. Here $P_{ac}$ denotes the projection onto the absolutely continuous spectral subspace of $-\Delta+V$. For $n=3$ such dispersive estimates were established in \cite{ra, je, JK} in weighted $L^2-$spaces. For corresponding $L^1 \to L^\infty$ bounds we refer to \cite{jss, gsch,wed} in the case $n=1,3$, and to \cite{sch} in the case $n=2$.  Situations in which zero is not a regular point of $-\Delta +V$   are studied in great generality in \cite{JK}, see also  \cite{es1, es2, rs}.  For a recent survey on the existing dispersive estimates for Schr\"odinger operators and a thorough bibliography, see \cite{sch2}.

%\smallskip

On the other hand, very little is known about estimates of type \eqref{rn} for magnetic Schr\"odinger operators, where $-\Delta+V$ is replaced by $(i\nabla +A)^2+V$ with a vector potential $A$.  Recently it was shown in \cite{kk} that for $n=3$ equation \eqref{rn} can be extended to magnetic Schr\"odinger operators, in suitable weighted $L^2-$spaces, under certain decay and regularity conditions on $A$ and $V$. As for the case $n=2$,  it was proved, see \cite{fffp},  that \eqref{rn} holds true when $V=0$ and $A=A_{ab}$ is the vector potential generating the so-called Aharonov-Bohm magnetic field, see equation \eqref{ab-potential} below.

\smallskip

The aim of this paper is to point out  the {\it diamagnetic effect} on the dispersive estimates \eqref{rn} in the case $n=2$. More precisely, we want to show that the magnetic field improves the decay rate of the propagator in dimension two. This is partially motivated by the following fact \cite{mu, sch2, go, eg, wed2}: if a Schr\"odinger operator $-\Delta+V$ in $L^2(\R^n)$ with $ n=1,2$ does not have a resonance at zero energy, then the time decay of $e^{-i t (-\Delta+V)}$, considered in suitable topology, is faster than the one predicted by \eqref{rn}. Since a magnetic field in $\R^2$ generically removes the resonance at zero energy, see \cite{lw, timo}, it is natural to expect a {\it faster} time decay for propagators generated by magnetic Schr\"odinger operators with respect to the non-magnetic ones.

We will prove this conjecture in the case of the Aharonov-Bohm magnetic field, that is for operators of the type $(i\nabla +A_{ab})^2+V$. In particular, we will show, under suitable assumptions on $V$, that in certain weighted $L^2-$spaces the associated propagator  decays as
\begin{equation} \label{imp-decay}
t^{-1-\min_{k\in\Z}|k-\alpha|}\, , \qquad t\to \infty,
\end{equation}
where $\alpha$ is the total flux of the Aharonov-Bohm field, see Theorem \ref{thm-2} for details. This is to be compared with the $t^{-1}$ decay rate of non-magnetic Schr\"odinger propagators, see \eqref{rn}.  In the special case $V=0$ we obtain a stronger result which provides a point wise upper bound on the integral kernel of $e^{-it (i\nabla +A_{ab})^2}$, see Theorem \ref{thm-1} and Corollary \ref{cor}.

Note that the decay rate of  the unitary group generated by $(i\nabla +A_{ab})^2+V$ is proportional to the distance between the magnetic flux and the set of integers, see \eqref{imp-decay}. This is expected since an Aharonov-Bohm field with an integer flux can be gauged away and therefore does not affect the spectral properties of the corresponding generator. Our main results, Theorem \ref{thm-1} and Theorem \ref{thm-2} are presented in section \ref{sec-results}. The proofs of the main results are given in section \ref{sec-proofs}.

Although our paper deals only with the case of the Aharonov-Bohm field, we believe that a similar improved time decay should occur for a much wider class of magnetic fields, see Remark \ref{rem-general} for further discussion.

%%%%%%%%%%%%%%%%%%%%%%%%%%%%%%%%%%%%%%%%%%%%%%%%%%%%%%%

\section{\bf Preliminaries}
\label{sec-prelim}
\noindent The vector potential
\begin{equation} \label{ab-potential}
A_{ab}(x) = (A_1(x), A_2(x))= \frac{\alpha}{|x|^2}\,  (-x_2\, ,\, x_1) \quad \text{on } \quad \R^2\setminus \{0\},
\end{equation}
generates the Aharonov-Bohm magnetic field which is fully characterized by its constant flux $\alpha$.
It is well-known, see e.g. \cite{at}, that the operator
\begin{equation} \label{c0}
(i\nabla +A_{ab})^2 \qquad \text{on} \quad C_0^\infty(\R^2\setminus\{0\})
\end{equation}
is not essentially self-adjoint and has deficiency indices $(2,2)$. Consequently, it admits infinitely many self-adjoint extensions. In this paper we will work with the Friedrichs extension of \eqref{c0} which we denote by $H_\alpha$. In order to define the latter we introduce a function space $W^{1,2}_\alpha(\R^2)$ given by the closure of $C_0^\infty(\R^2\setminus\{0\})$ with respect to the norm
$$
\| u\|_{L^2(\R^2)} + \| (i\nabla+A_{ab})\, u\|_{L^2(\R^2)}.
$$
The quadratic form
\begin{equation}
Q_\alpha [u]= \| (i \nabla+A_{ab})\, u\|^2_{L^2(\R^2)}
\end{equation}
with the form domain $W^{1,2}_\alpha(\R^2)$ is then closed and generates a unique non-negative self-adjoint operator $H_\alpha$ in $L^2(\R^2)$. Next we introduce an additional electric potential $V:\R^2\to \R$ and consider the Schr\"odinger operator
\begin{equation} \label{H}
H = H_\alpha+V   \qquad \text{in} \ \  L^2(\R^2),
\end{equation}
see section \ref{aaa} for a precise definition of $H$.

\begin{remark} \label{rem-gauge}
The operators $H_\alpha+V$ and $H_{m+\alpha}+V$ are unitarily equivalent for any $m\in\Z$. Indeed, in view of  equation \eqref{q-form} we have
$$
H_{m+\alpha}+V = \, \U_m\, (H_\alpha+V)\ \U_m^{-1}, \qquad m\in\Z,
$$
where $\U_m: L^2(\R^2) \to L^2(\R^2) $ is the unitary operator acting as $\U_m\, u = e^{i m\theta}\, u$. We may therefore assume without loss of generality that
$$
0 < | \alpha| \leq \frac 12, \qquad \text{which \ implies} \qquad  \min_{k\in\Z}|k-\alpha| = |\alpha|.
$$
\end{remark}

\subsection{Notation}
Given $s\in\R$ we denote
$$
L^2(\R^2, w^s) = \{ u : \|  w^{s}  u\|_{L^2(\R^2)} < \infty \}, \qquad \|u\|_{0,s} :=  \|  w^{s}  u\|_{L^2(\R^2)},
$$
where $w(x) =(1+|x|^2)^{1/2}$. For $x=(x_1, x_2)\in\R^2$ and  $y=(y_1, y_2) \in \R^2$  we will often use the polar coordinates representation
\begin{equation} \label{polar-repr}
x_1 + i x_2  = r  e^{i \theta}, \quad y_1 + i y_2 =r'  e^{i \theta'},  \qquad r,r' \geq 0, \quad \  \theta, \theta' \in [0, 2\pi).
\end{equation}
Given $R>0$ and a point $x\in\R^2$ we denote by $B(x,R)\subset\R^2$ the open ball with radius $R$ centered in $x$.
Let $\B(s,s')$ be the space of bounded linear operators from $L^2(\R^2, w^s)$ to $L^2(\R^2, w^{s'})$.
By $\|K\|_{\B(s,s')}$ we denote the norm of a bounded linear operator $K$ in $\B(s,s')$. %respectively $\B(\rho, \rho^{-1})$.
The scalar product in a Hilbert space $\mathscr{H}$ will be denoted by $\langle \cdot \, , \cdot \rangle_{\mathscr{H}}$. Finally, we denote $\R_+ = (0,\infty)$ and $\C_+ = \{z\in\C\, :\, {\rm Im}\, z >0\}$.

\medskip

%%%%%%%%%%%%%%%%%%%%%%%%%%%%%%%%%%%%%%%%%%%%
%%%%%%%%%%%%%%%%%%%%%%%%%%%%%%%%%%%%%%%%%%%%%%%%%%%%%%%

\section{\bf Main results}
\label{sec-results}

\subsection{ Time decay for $e^{-it H_\alpha}$} We say that $e^{-i t H_\alpha}(x,y), \ x,y \in \R^2$ is an integral
kernel of the operator $e^{- it H_\alpha}$ if
\begin{equation} \label{L2-kernel}
\big(e^{- it H_\alpha}\, u\big) (x) = \int_{\R^2} e^{-i t H_\alpha}(x,y) \, u(y)\, dy
\end{equation}
holds for any $u\in L^2(\R^2)$ with compact support.

\begin{theorem} \label{thm-1}
Let $| \alpha| \leq 1/2$. There exists a kernel $e^{-i t H_\alpha}(x,y)$ of $e^{- it H_\alpha}$ such that for all $x,y\in\R^2$ it holds
\begin{align} \label{limit-1}
\lim_{t\to \infty}\, (it)^{1+ |\alpha|}\, e^{-i t H_\alpha}(x,y) & =
\frac{1}{4\pi \Gamma(1+| \alpha|)}\, \Big(\frac{ r
r'}{4}\Big)^{|\alpha|} & \quad \text{if\, \, } |\alpha| < 1/2, \\  \label{limit-2}
\lim_{t\to \infty}\, (it)^{\frac 32}\, e^{-i t H_\alpha}(x,y) & =
\frac{1}{4\pi \Gamma(3/2)}\, \Big(\frac{r r'}{4}\Big)^{\frac
12}\, (1+e^{\mp i(\theta-\theta')}) & \quad \text{if\, \, } \alpha = \pm 1/2.
\end{align}

\smallskip

\noindent Moreover, there is a constant $C$ such that for all $t>0$ and all $x,y \in\R^2$
\begin{equation} \label{point-upperb}
|\, e^{-i\, t H_\alpha}(x,y)\, | \, \leq\, C\, \min\big\{ t^{-1},\  (r r') ^{|\alpha|}\, \ t^{-1-|\alpha|}\, \big\} .
\end{equation}
\end{theorem}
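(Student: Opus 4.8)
The plan is to diagonalize $H_\alpha$ by separation of variables and reduce everything to explicit Bessel-function identities. Writing $x=re^{i\theta}$, $y=r'e^{i\theta'}$ as in \eqref{polar-repr} and decomposing $L^2(\R^2)=\bigoplus_{k\in\Z} L^2(\R_+, r\,dr)\otimes\mathrm{span}\{e^{ik\theta}\}$, the form $Q_\alpha$ splits along the angular modes, so that in the $k$-th sector $H_\alpha$ acts as the radial Bessel operator
\begin{equation*}
h_k=-\partial_r^2-\frac1r\,\partial_r+\frac{(k-\alpha)^2}{r^2},\qquad r\in\R_+ .
\end{equation*}
Since we work with the Friedrichs extension, each $h_k$ is the nonnegative self-adjoint operator diagonalized by the Hankel transform of order $\nu_k:=|k-\alpha|$, so its propagator has the spectral representation $e^{-ith_k}(r,r')=\int_0^\infty e^{-it\lambda^2}J_{\nu_k}(\lambda r)J_{\nu_k}(\lambda r')\,\lambda\,d\lambda$.

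Next I would evaluate this integral by means of Weber's second exponential integral, $\int_0^\infty e^{-p\lambda^2}J_\nu(a\lambda)J_\nu(b\lambda)\,\lambda\,d\lambda=(2p)^{-1}\exp(-(a^2+b^2)/4p)\,I_\nu(ab/2p)$ for $\re p>0$, and pass to the boundary value $p=it$ (justified by a limiting-absorption argument letting $\re p\downarrow0$). Summing the angular series against the projection kernels $\tfrac1{2\pi}e^{ik(\theta-\theta')}$ then yields the closed form
\begin{equation*}
e^{-itH_\alpha}(x,y)=\frac{1}{4\pi it}\,\exp\!\Big(\frac{i(r^2+r'^2)}{4t}\Big)\sum_{k\in\Z}e^{ik(\theta-\theta')}\,I_{|k-\alpha|}\!\Big(\frac{rr'}{2it}\Big),
\end{equation*}
which is the backbone of both assertions and which already defines a genuine $L^2$-kernel in the sense of \eqref{L2-kernel}.

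For the asymptotics I would exploit that as $t\to\infty$ the argument $z=rr'/(2it)\to0$, together with $I_\nu(z)=(z/2)^\nu/\Gamma(\nu+1)+O(|z|^{\nu+2})$. By Remark \ref{rem-gauge} we may assume $0<|\alpha|\le\tfrac12$, so that $\min_{k}|k-\alpha|=|\alpha|$. When $|\alpha|<\tfrac12$ this minimum is attained only at $k=0$; retaining that single term and controlling the remainder by the uniform bound below gives \eqref{limit-1}. When $\alpha=\pm\tfrac12$ the minimum is attained at two consecutive integers, whose phase factors add up precisely to $1+e^{\mp i(\theta-\theta')}$, producing \eqref{limit-2}.

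It then remains to bound $S(\rho,\phi):=\sum_{k}e^{ik\phi}I_{|k-\alpha|}(-i\rho)$ with $\rho=rr'/2t$, $\phi=\theta-\theta'$, uniformly, since $|e^{-itH_\alpha}(x,y)|=(4\pi t)^{-1}|S(\rho,\phi)|$. Using $|I_\nu(-i\rho)|=|J_\nu(\rho)|\le(\rho/2)^\nu/\Gamma(\nu+1)$ I obtain, for $\rho\lesssim1$, the estimate $|S|\lesssim(\rho/2)^{|\alpha|}$ (the series being dominated by its smallest-order term), which gives the bound $(rr')^{|\alpha|}t^{-1-|\alpha|}$. The regime $\rho\gg1$ is the main obstacle: there the same pointwise bound is useless, since there are $\sim\rho$ terms of size $\sim\rho^{-1/2}$, and crude estimates would only yield $O(\rho^{1/2})$. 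One must instead extract the cancellation carried by the phases $e^{ik\phi}$ and prove $|S|\le C$ uniformly in $\rho$ and $\phi$; I expect to achieve this through a closed-form resummation of the Bessel series — an Aharonov--Bohm analogue of the Jacobi--Anger expansion reducing $S$ to the free kernel plus a bounded diffractive integral. Combining the two regimes delivers the $\min\{t^{-1},\,(rr')^{|\alpha|}t^{-1-|\alpha|}\}$ bound \eqref{point-upperb}.
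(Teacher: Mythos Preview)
Your derivation of the explicit kernel via the Hankel transform, Weber's second exponential integral, and the limiting-absorption passage $\re p\downarrow 0$ is exactly the route the paper takes (Lemma~\ref{bessel} and equation~\eqref{ab-prop}), and your extraction of the asymptotics \eqref{limit-1}--\eqref{limit-2} from the leading-order behavior of $I_\nu$ also matches the paper.

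The only substantive point of divergence is the uniform bound $|S(\rho,\phi)|\le C$ in the regime $\rho\gtrsim 1$, which you correctly flag as the main obstacle. You propose to close it by an Aharonov--Bohm analogue of the Jacobi--Anger resummation, but you do not carry this out. The paper does \emph{not} perform any such resummation either: it simply imports the $L^1\to L^\infty$ dispersive bound $\sup_{x,y}|e^{-itH_\alpha}(x,y)|\le C_0\,t^{-1}$ from \cite[Cor.~1.7]{fffp} as a black box, and then combines it with your small-$\rho$ estimate (which the paper obtains from the integral representation \eqref{int-repr} of $I_\nu$) on the complementary region $\{rr'<t\}$. So the paper's proof of \eqref{point-upperb} is in fact \emph{shorter} than what you outline, at the price of relying on \cite{fffp}. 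Your proposed resummation is feasible (such formulas go back to \v{S}\v{t}ov\'{\i}\v{c}ek and others and do reduce $S$ to the free kernel plus a bounded diffractive integral), and would make the argument self-contained; but as written it is an unexecuted step, and you should either cite \cite{fffp} as the paper does or actually carry the resummation through.
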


\medskip

\begin{remark}
In \cite[Thm.1.3 \& Cor.1.7]{fffp} it was shown that
\begin{equation*}
\sup_{x,y \in \R^2}\, |\, e^{-i\, t H_\alpha}(x,y)\, | \, \leq\, {\rm const} \ t^{-1}.
\end{equation*}
This obviously yields the first part of \eqref{point-upperb}. Moreover, the decay rate $t^{-1}$ in the above estimate is sharp. However, the second term on the right hand side of  \eqref{point-upperb} shows that for fixed $x$ and $y$ the kernel $ e^{-i\, t H_\alpha}(x,y)$ decays faster than $t^{-1}$.
\end{remark}

\noindent Inequality \eqref{point-upperb} implies

\begin{corollary} \label{cor}
There exists $C>0$ such that for all $t>0$ we have
\begin{align*}
\| e^{-i\, t H_\alpha}\, u \|_{0,-s} \ & \leq\ C\ t^{-1-|\alpha|}\, \|u\|_{0,s} \qquad \qquad \ \ \forall\ u \in L^2(\R^2, w^s), \quad s > 1+|\alpha|,\\
\| w^{-s} e^{-i\, t H_\alpha}\, u \|_\infty \ & \leq\ C\ t^{-1-|\alpha|}\, \|w^s\, u\|_{L^1(\R^2)} \qquad \forall\ u \in L^1(\R^2, w^s), \quad  s \geq |\alpha|,
\end{align*}
where $L^1(\R^2, w^s) = \{ u : \|  w^{s}\,  u\|_{L^1(\R^2)} < \infty \}$.
\end{corollary}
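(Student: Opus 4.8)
The plan is to deduce both inequalities directly from the pointwise kernel bound \eqref{point-upperb} of Theorem \ref{thm-1}, which already contains all the analytic information we need; no further spectral input on $H_\alpha$ is required. Throughout I would use only the second term in the minimum of \eqref{point-upperb}, namely $|e^{-i t H_\alpha}(x,y)|\le C\,(r r')^{|\alpha|}\,t^{-1-|\alpha|}$, together with the elementary inequalities $|x|^{|\alpha|}\le w(x)^{|\alpha|}$ and $w(x)^{a}\le w(x)^{b}$ for $a\le b$.

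For the second estimate (the $L^1\to$ weighted $L^\infty$ bound) I would argue pointwise. For $u\in L^1(\R^2,w^s)$ with $s\ge|\alpha|$, insert \eqref{L2-kernel} and estimate
\begin{align*}
w(x)^{-s}\,\big|\big(e^{-i t H_\alpha}u\big)(x)\big|
&\le w(x)^{-s}\int_{\R^2}|e^{-i t H_\alpha}(x,y)|\,|u(y)|\,dy \\
&\le C\,t^{-1-|\alpha|}\,w(x)^{-s}|x|^{|\alpha|}\int_{\R^2}|y|^{|\alpha|}\,|u(y)|\,dy \\
&\le C\,t^{-1-|\alpha|}\int_{\R^2}w(y)^{s}\,|u(y)|\,dy,
\end{align*}
where the last line uses $w(x)^{-s}|x|^{|\alpha|}\le 1$ and $|y|^{|\alpha|}\le w(y)^{s}$, both valid precisely because $s\ge|\alpha|$. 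Taking the supremum over $x\in\R^2$ yields the claimed bound on $\|w^{-s}e^{-i t H_\alpha}u\|_\infty$, and shows that the threshold $s\ge|\alpha|$ is exactly what is needed.

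For the first estimate (weighted $L^2\to L^2$), the cleanest route is a Hilbert--Schmidt bound. Writing $v=w^s u$, the inequality is equivalent to $\|w^{-s}e^{-i t H_\alpha}w^{-s}\|_{\B(0,0)}\le C\,t^{-1-|\alpha|}$. The operator $K_t:=w^{-s}e^{-i t H_\alpha}w^{-s}$ has integral kernel $w(x)^{-s}e^{-i t H_\alpha}(x,y)\,w(y)^{-s}$, and since the operator norm is dominated by the Hilbert--Schmidt norm,
\begin{equation*}
\|K_t\|_{\mathrm{HS}}^2=\int_{\R^2}\!\int_{\R^2}w(x)^{-2s}\,|e^{-i t H_\alpha}(x,y)|^2\,w(y)^{-2s}\,dx\,dy\le C\,t^{-2-2|\alpha|}\Big(\int_{\R^2}w(x)^{-2s}|x|^{2|\alpha|}\,dx\Big)^{2}.
\end{equation*}
A polar-coordinate computation shows that the remaining integral converges if and only if $2s-2|\alpha|>2$, i.e. $s>1+|\alpha|$, and then $\|K_t\|_{\mathrm{HS}}\le C\,t^{-1-|\alpha|}$, which is the assertion.

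The one point requiring care — and the reason I would not use a Schur test for the $L^2$ estimate — is matching the sharp threshold $s>1+|\alpha|$. A Schur test controls $\sup_x\int|K_t(x,y)|\,dy$, but that integral involves $\int_{\R^2}|y|^{|\alpha|}w(y)^{-s}\,dy$, which converges only for $s>2+|\alpha|$, strictly stronger than what is claimed. The Hilbert--Schmidt norm instead distributes the weight loss symmetrically between the two variables and demands only $L^2$- rather than $L^1$-integrability in each, and it is exactly this that lowers the requirement to the sharp value $s>1+|\alpha|$. Everything else is a routine application of the kernel bound and of monotonicity of $w$.
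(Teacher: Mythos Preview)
Your proof is correct and is precisely the kind of direct deduction from \eqref{point-upperb} that the paper intends: the paper gives no argument beyond the phrase ``Inequality \eqref{point-upperb} implies,'' and your Hilbert--Schmidt computation for the weighted $L^2$ bound and the pointwise estimate for the weighted $L^1\!\to\! L^\infty$ bound are the natural way to cash this out. Your remark that a Schur test would only reach $s>2+|\alpha|$, while the Hilbert--Schmidt norm gives the stated threshold $s>1+|\alpha|$, is a nice clarification that the paper omits.
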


\smallskip

\begin{remark}
Improved time decay of semi-groups generated by two-dimensional magnetic Schr\"odinger operators was recently studied in \cite{k11, kr}.
\end{remark}

\medskip

%%%%%%%%%%%%%%%%%%%%%%%%%%%%%%%%%%
\subsection{ Time decay for $e^{-it (H_\alpha+V)}$}\label{aaa} In this case we will introduce another operator norm associated to the weight function $\rho:\R^2\to \R$ given by $\rho(x) = \rho(|x|) =  e^{|x|^4}$. Let
$$
L^2(\R^2, \rho) = \{ u : \|  \rho^{1/2} \, u\|_{L^2(\R^2)} < \infty \}, \qquad \|u\|_{\rho} :=  \|  \rho^{1/2}\,  u\|_{L^2(\R^2)}.
$$
We denote by $\B(\rho, \rho^{-1})$ the space of bounded linear operators from $L^2(\R^2,\rho)$ to $L^2(\R^2,\rho^{-1})$.
Consequently,  we will denote by $\|K\|_{\B(\rho, \rho^{-1})}$ the norm of $K\in\B(\rho, \rho^{-1})$.
Recall that we are assuming $ 0 < |\alpha| \leq 1/2$ and let $G_0$ be the integral operator with the kernel
\begin{equation} \label{g0}
G_0(x,y)  = \frac {1}{4\pi}\, \sum_{m\in \Z} \, \frac{\, e^{im(\theta-\theta')}}{|\alpha+m|}\  \Big(\min \Big\{ \frac{r}{r'}\, ,\, \frac{r'}{r}\, \Big\} \Big)^{|\alpha+m|}.
\end{equation}
Note that the series on the right hand side of \eqref{g0} converges for all $x\neq y$. Moreover, from equation \eqref{hs-norm} below it easily follows that $G_0 \in \B(\rho, \rho^{-1})$. As for the potential $V$, we suppose that it satisfies the following

\begin{assumption} \label{ass-V}
The function $V: \R^2\to \R$ is bounded and compactly supported.  Moreover, for any $ u\in L^2(\R^2, \rho^{-1})$ it holds
\begin{equation} \label{V-as2}
 u+ G_0 V \, u = 0 \quad \Rightarrow \quad u=0.
\end{equation}
\end{assumption}

\noindent Condition \eqref{V-as2} is crucial as it excludes the possibility that $0$ is a resonance of $H$.

\smallskip

\noindent The motivation for the assumption that $V$ has compact support is twofold. On one hand, it guarantees that $V\in \B(\rho^{-1}, \rho)$ which is needed in \eqref{V-as2}. On the other hand, the compactness of the support of $V$ will play an important role in the proof of absence of positive eigenvalues of the operator $H_\alpha+V$, see Proposition \ref{prop-high}.

Note that since we don't have at our disposal a class of slowly decaying potentials for which $H_\alpha +V$ has no eigenvalues, contrary to the non-magnetic situation, we cannot a priori follow the general procedure invented
in \cite{rs} in order to include potentials with power-like decay at infinity.

\medskip

\noindent To proceed we note that since $V$ is bounded, the operator $H= H_\alpha +V$ is self-adjoint on the domain of $H_\alpha$, by Rellich's theorem. Let $P_c= P_{c}(H)$ denote the projection onto the continuous spectral subspace of $H$. We have

\begin{theorem} \label{thm-2}
Let $ 0 < |\alpha| \leq 1/2$ and let $V$ satisfy assumption \ref{ass-V}. Denote by $\G_j\in \B(\rho, \rho^{-1}),\, j=1,2,$ the operators with  integral kernels
\begin{align*}
\mathcal{G}_1(x,y) & = \frac{1}{4 \pi \Gamma(1+|\alpha|)}\,  \Big(\frac {r r'}{4}\Big) ^{|\alpha|} %\label{g1}
\\
\G_2(x,y) & = e^{i (\theta'-\theta) \sgn\alpha}\, \, \frac{\Gamma(1+|\alpha|)}{4\pi \Gamma^2(1+|\alpha-\sgn\alpha|)}\,  \Big(\frac {r r'}{4}\Big) ^{|\alpha-\sgn\alpha|}   .
%\label{g2}
\end{align*}
Then, as $t\to\infty$
\begin{align}
e^{-i t H}\, P_c & =   (it)^{-1-|\alpha|}\, (1+G_0 V)^{-1}\, \G_1\, (1+V G_0 )^{-1} + o (\, t^{-1-|\alpha|})
  \quad  & \text{if} \quad |\alpha| < 1/2, \label{limit-3}\\
 e^{-i t H}\, P_c & = (it)^{-\frac 32}\, (1+G_0 V)^{-1}\, (\G_1+\G_2)\, (1+V G_0)^{-1} + o (\, t^{-\frac 32})     \quad & \text{if} \quad |\alpha| = 1/2  \label{limit-4}
\end{align}
in $\B(\rho, \rho^{-1})$.
In particular, there exists a constant $C$ such that for all $t>0$ and all $u \in L^2(\R^2, \rho)$ we have
\begin{equation} \label{weight-estim}
\| \, e^{-i\, t H}\, P_c\, u \|_{\rho^{-1} }\, \leq\, C\ t^{-1-|\alpha|}\, \|u\|_{\rho}.
\end{equation}
\end{theorem}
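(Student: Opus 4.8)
The plan is to study $e^{-itH}P_c$ through the spectral (Stone) formula
$$
e^{-itH} P_c = \frac{1}{2\pi i}\int_0^\infty e^{-it\lambda}\big[\, R(\lambda+i0) - R(\lambda - i0)\,\big]\, d\lambda, \qquad R(z)=(H-z)^{-1},
$$
so that the large-$t$ behaviour is governed by the boundary values of the resolvent at the threshold $\lambda=0$. First I would justify this representation in $\B(\rho,\rho^{-1})$: since $V$ is bounded and compactly supported, $H$ is self-adjoint on $\dom(H_\alpha)$, its essential spectrum is $[0,\infty)$, and $P_c$ projects off the finitely many negative eigenvalues. The facts that $H_\alpha+V$ has no positive eigenvalues and that the limiting absorption principle holds up to $\lambda=0$ are supplied by the high-energy analysis of Proposition \ref{prop-high} (which uses the compact support of $V$); this also shows that the portion of the integral with $\lambda\geq\eps$ contributes $O(t^{-N})$ for every $N$ and is therefore negligible, so that the whole asymptotics comes from an arbitrarily small neighbourhood of the threshold.

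Next I would compute the low-energy expansion of the free resolvent $R_\alpha(z)=(H_\alpha-z)^{-1}$. Decomposing $H_\alpha$ into angular momentum channels $m\in\Z$ reduces it to a family of radial Bessel operators with indices $\nu_m=|\alpha+m|$, whose resolvent kernels are expressed through $J_{\nu_m}$ and $H^{(1)}_{\nu_m}$; the small-argument expansion $J_\nu(kr)\sim (kr/2)^\nu/\Gamma(1+\nu)$ then produces, channel by channel, the limit $R_\alpha(z)\to G_0$ together with a leading singular term proportional to $z^{\nu_m}(rr'/4)^{\nu_m}/\Gamma(1+\nu_m)^2$. The decisive point, and the diamagnetic mechanism behind the improved decay, is that the flux shifts every index away from zero, so the lowest exponent is $\nu_0=|\alpha|$ (the next one being $\nu_{-\sgn\alpha}=1-|\alpha|$), and no logarithmic $s$-wave singularity survives as it would for $V$ alone in two dimensions. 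For $|\alpha|<1/2$ only the channel $m=0$ supplies the leading $z^{|\alpha|}$ term, giving $\G_1$; for $|\alpha|=1/2$ the channels $m=0$ and $m=-\sgn\alpha$ share the same exponent $1/2$ and together produce $\G_1+\G_2$.

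Then I would pass from $R_\alpha$ to $R$ via the factorisation $R(z)=(1+R_\alpha(z)V)^{-1}R_\alpha(z)=R_\alpha(z)(1+VR_\alpha(z))^{-1}$. Since $R_\alpha(z)\to G_0$ and $V\in\B(\rho^{-1},\rho)$, the operators $1+R_\alpha(z)V$ converge to $1+G_0V$, which is invertible in $\B(\rho,\rho^{-1})$ \emph{precisely} by the no-resonance Assumption \ref{ass-V}; hence $(1+R_\alpha(z)V)^{-1}$ remains bounded and continuous down to $z=0$. Inserting the expansion of $R_\alpha$ and collecting the leading singular term yields
$$
R(\lambda+i0)-R(\lambda-i0)= c\,\lambda^{|\alpha|}\,(1+G_0V)^{-1}\,\G_1\,(1+VG_0)^{-1}+(\text{lower order}),
$$
with the analogous $\G_1+\G_2$ expression when $|\alpha|=1/2$. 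Substituting into the Stone formula and evaluating the resulting oscillatory integrals by Watson's lemma, using $\int_0^\infty e^{-it\lambda}\lambda^{\beta}\,d\lambda=\Gamma(1+\beta)(it)^{-1-\beta}$ with $\beta=|\alpha|$ (resp. $1/2$), gives the stated leading terms $(it)^{-1-|\alpha|}$ and $(it)^{-3/2}$ plus an $o(\cdot)$ remainder; here the $\Gamma(1+|\alpha|)$ from the integral cancels one of the two $\Gamma$-factors of the density, leaving the single $\Gamma(1+|\alpha|)^{-1}$ seen in $\G_1$. The uniform bound \eqref{weight-estim} follows by combining this asymptotic expansion with a uniform-in-$t$ resolvent estimate on a fixed threshold neighbourhood.

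The main obstacle I anticipate is making the low-energy expansion rigorous as an operator identity in the very strongly weighted space $\B(\rho,\rho^{-1})$: one must sum the channel-by-channel expansions over all $m\in\Z$ and control the remainder uniformly for $z$ near $0$. The super-exponential weight $\rho=e^{|x|^4}$ is dictated by this summation, since the expansion coefficients grow like $(rr'/4)^{\nu_m}/\Gamma(1+\nu_m)^2$, whose sum over $m$ behaves like $\exp(c\sqrt{|x|\,|y|})$, so that only a rapidly growing weight renders $G_0$, the operators $\G_j$, and all remainder kernels bounded from $L^2(\R^2,\rho)$ to $L^2(\R^2,\rho^{-1})$. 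Establishing these weighted operator bounds together with the continuity and invertibility of $1+R_\alpha(z)V$ uniformly down to the threshold, and thereby controlling the $o(\cdot)$ term in the oscillatory integral, is where the real work lies; once this is in place, the identification of the constants in $\G_1$ and $\G_2$ is a bookkeeping of Bessel and Gamma factors.
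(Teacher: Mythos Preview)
Your approach is essentially the same as the paper's: Stone formula, low-energy expansion of $R_\alpha$ via the Bessel series, factorisation $R=(1+R_\alpha V)^{-1}R_\alpha$ with invertibility supplied by Assumption~\ref{ass-V}, and an Erdelyi-type oscillatory integral for the leading term. Two small corrections: the high-energy piece is shown only to be $o(t^{-2})$ (via Proposition~\ref{prop-high} and the Jensen--Kato lemma requiring $E''\in L^1$), not $O(t^{-N})$ for all $N$, though this already suffices since $1+|\alpha|\leq 3/2$; and the low-energy remainder is controlled not just by the expansion of $R$ itself but by the second-derivative bound $R''(\alpha,\lambda)=O(\lambda^{|\alpha|-2})$ (Corollary~\ref{cor-der-0}), which is what feeds into Jensen--Kato's Lemma~10.2 to produce the $o(t^{-1-|\alpha|})$ error. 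Finally, the uniform bound~\eqref{weight-estim} comes for free from unitarity of $e^{-itH}$ and the trivial embedding $L^2\hookrightarrow L^2(\rho^{-1})$, not from a separate resolvent estimate.
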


\smallskip

\begin{remark}
The reason why we work with the weighted spaces $L^2(\R^2, \rho)$ and not with $L^2_s(\R^2)$ is technical. Our goal is to keep track of the diamagnetic effect on $e^{-i\, t H}$, i.e. of the improved time decay caused by the magnetic field, observed in the case $V=0$. Therefore we employ the perturbation approach with $H_\alpha$ as the free operator. This requires a precise knowledge of the behavior of the resolvent of $H_\alpha$ near the threshold of the spectrum. Since we don't have a simple formula for its integral kernel, contrary to the situation without a magnetic field, we work with the power-series \eqref{serie} below. To make sure that this series converges absolutely in $\B(\rho, \rho^{-1})$ we have to require a very fast growth of the weight function $\rho$. Note that super-polynomially growing weight functions were used to study decay estimates of non-magnetic Schr\"odinger operators already in \cite{ra}.

Nonetheless, we expect that the claim of Theorem \ref{thm-2} remains true also if $L^2(\R^2, \rho)$ and $L^2(\R^2, \rho^{-1})$ are replaced throughout by $L^2(\R^2, w^s)$ and $L^2(\R^2, w^{-s})$ with $s$ large enough.
\end{remark}

\begin{remark}
Assumption \ref{ass-V} ensures that the operators $1+G_0 V$ and $1+V G_0$ are invertible in $\B(\rho^{-1}, \rho^{-1})$ and $\B(\rho, \rho)$ respectively, see the proof of Lemma \ref{lem-exp1} and Remark \ref{dual} for details. Hence $(1+G_0 V)^{-1}\, \G_j\, (1+V G_0 )^{-1}$ are well defined and belong to $\B(\rho, \rho^{-1})$. Note also that in the case $V=0$ equations \eqref{limit-3} and \eqref{limit-4} agree with the asymptotics \eqref{limit-1} and \eqref{limit-2}.
\end{remark}

\begin{remark} \label{rem-general}
Our method does not enable us to extend the above results to a more general class of magnetic fields. On the other hand, decay estimates on the magnetic heat semi-group obtained recently in \cite{k11, kr} suggest that for a sufficiently smooth magnetic field $B=$ rot$\, A$ with a finite total flux $\alpha$ one should be able to observe, in a suitable operator topology, that as $t\to\infty$
\begin{align*}
e^{-it (i\nabla +A)^2} & = \mathcal{O}(t^{-1-\min_{k\in\Z}|k-\alpha|)})    \qquad  \ \ \text{if } \ \alpha \notin\Z, \\
e^{-it (i\nabla +A)^2} & = \mathcal{O}\big(t^{-1}\, \log^{-2} t\big)  \qquad \qquad\quad \text{if } \ \alpha \in \Z.
\end{align*}
This question remains open.
\end{remark}

%%%%%%%%%%%%%%%%%%%%%%%%%%%%%%%%%%%%%%%%%%%%%%%%%%%%%
\section{\bf Partial wave decomposition} \label{ss:pwave}

\noindent The quadratic form associated to $H_\alpha$ in polar coordinates $(r,\theta)$ reads as follows:
\begin{equation} \label{q-form}
Q_\alpha[u] =  \int_0^\infty\! \int_0^{2\pi} \left(|\pd_r u|^2+ r^{-2}
|i\, \pd_\theta u+\alpha u|^2\right) r\, dr d\theta, \quad u\in W^{1,2}_\alpha(\R^2).
\end{equation}
By expanding a given function $u\in L^2(\R_+\times (0,2\pi))$ into a
Fourier series with respect to the basis $\{e^{i
 m\theta}\}_{m\in\Z}$ of $L^2((0,2\pi))$, we obtain
a direct sum decomposition
\begin{equation} \label{sum-gen}
H_\alpha =   \bigoplus_{m\in\Z}  \big( h_m \otimes\mbox{id}\big)
\Pi_m,
\end{equation}
where $h_m$ are operators generated by the closures, in $L^2(\R_+, r
dr)$, of the quadratic forms
\begin{equation} \label{qm}
Q_m [f] = \int_0^\infty\, \Big(|f'|^2+\frac{(\alpha+m)^2}{r^2}\,
|f|^2\Big)\, r\, dr
\end{equation}
defined initially on $C_0^\infty(\R_+)$, and $\Pi_m$ is the projector acting as
$$
(\Pi_m\, u)(r,\theta) = \frac{1}{2\pi}\, \int_0^{2\pi}\,
e^{im(\theta-\theta')}\, u(r,\theta')\, d\theta'.
$$

%%%%%%%%%%%%%%%%%%%%%%%%%%%%%%%%%

\section{\bf Resolvent estimates}
\label{sec-aux}

\noindent In order to prove Theorem \ref{thm-2} we will follow the strategy developed in \cite{JK}. This requires precise estimates on the resolvent of $(H-z)^{-1}$ and its derivatives with respect to $z$. Such estimates are the main objects of our interest in this section.Let us denote by $R_0(\alpha, z) = (H_\alpha-z)^{-1}, \, z\in\C$ the resolvent of the free operator $H_\alpha$. From now on, in order to keep in mind that we work with the continuation of the resolvent form the upper-half plane, for any $\lambda\in(0, \infty)$ we will adopt the following notation:
\begin{equation} \label{it-limit}
R_0(\alpha, \lambda) : = \lim_{\eps\to 0+}\,  R_0(\alpha, \lambda+i\eps) ,
\end{equation}
where according to \cite[Prop. 7.3]{IT} the limit is attained locally uniformly in $\lambda$ on $(0,\infty)$ with respect to the norm of $\B(s,-s)$ for any $s>1/2$.

%%%%%%%%%%%%%%%%%%%%%%%%%%%%%%%%%

\subsection{Low energy behavior}

Recall that we keep on assuming $ 0 < |\alpha| \leq 1/2$. In order to study the behavior of $R_0(\alpha, \lambda)$ for  $\lambda\to 0$,  $\lambda \in (0,\infty)$, we first consider
the resolvent kernels of the one-dimensional operators $h_m$ associated with quadratic form \eqref{qm}.
We follow \cite[Sect.5]{k11} and consider the operators
\begin{equation} \label{lbeta2}
\h_m = \w\, h_m\, \w^{-1} \qquad \text{in \, \, \, } L^2(\R_+, dr),
\end{equation}
where $\w: L^2(\R_+, r\, dr)\to L^2(\R_+, dr)$ is a unitary mapping
acting as $(\w f)(r) = r^{1/2} f(r)$. Note that $\h_m$ is subject to Dirichlet boundary condition at zero. To find an expression for the resolvent of $\h_m$ we have to solve the generalized eigenvalue equation
$$
\h_m \, u = - u''(r)\, +\,  \frac{(m+\alpha)^2-\frac 14}{r^2}\ u = \lambda\, u,
$$
and in particular we have to find two solutions $\psi(\lambda, r)$ and $\phi(\lambda,r)$ which satisfy the conditions
$\psi(\lambda,0)=0$ and $\phi(\lambda+i\eps, \cdot\, ) \in L^2(1,\infty)$ for $\eps>0$.
By a straightforward calculation, taking into account the asymptotic properties of Bessel functions, namely \cite[Eqs.9.1.3, 9.2.3]{as}, we find that
\begin{align*}
\psi(\lambda, r) & = \sqrt{r}\ J_{|m+\alpha|}(r\sqrt{\lambda}\, ) \\
\phi(\lambda, r) & = \sqrt{r}\  \left( J_{|m+\alpha|}(r\sqrt{\lambda}\, ) + i\, Y_{|m+\alpha|}(r \sqrt{\lambda}\, ) \right),
\end{align*}
where $J_{\nu}$ and $Y_{\nu}$ are the Bessel functions of the first and second kind respectively.
Since the Wronskian of $\psi$ and $\phi$ is equal to $-2 i/\pi$, see \cite[Eq.9.1.16]{as},
the Sturm-Liouville theory shows that for $r<r'$ we have
\begin{align*}
\lim_{\eps\to 0+}(\h_m-\lambda-i\eps)^{-1}(r,r') & = \frac{\pi i}{2}\,  \sqrt{r r' }\,
\,J_{|m+\alpha|}(r\sqrt{\lambda}\, ) \left(
J_{|m+\alpha|}(r'\sqrt{\lambda}\, ) + i\, Y_{|m+\alpha|}(r
'\sqrt{\lambda}\, ) \right) .
\end{align*}
Hence in view of \eqref{lbeta2} it follows that for $\lambda \in (0,\infty)$ and  $r<r'$
\begin{align*}
(h_m-\lambda)^{-1}(r,r') & = \lim_{\eps\to 0+}(h_m-\lambda-i\eps)^{-1}(r,r') = \frac{\pi i}{2}\,
\,J_{|m+\alpha|}(r\sqrt{\lambda}\, ) \big(
J_{|m+\alpha|}(r'\sqrt{\lambda}\, ) + i\, Y_{|m+\alpha|}(r
'\sqrt{\lambda}\, ) \big).
\end{align*}
When $r'\leq r$, then we switch $r'$ and $r$
in the above formula. In the sequel we will assume for definiteness that $r< r'$.
By \eqref{sum-gen} and \eqref{polar-repr} we get
\begin{equation} \label{r0-kernel}
R_0(\alpha, \lambda, x,y) = \frac{i}{4 } \sum_{m\in\Z} \ \, J_{|\alpha+m|}(r\sqrt{\lambda}\, ) \big(
J_{|\alpha+m|}(r'\sqrt{\lambda}\, ) + i\, Y_{|\alpha+m|}(r'\sqrt{\lambda}\, ) \big) \,
e^{im(\theta-\theta')}\,
\end{equation}
Using the identities
$$
Y_\nu(z) = \frac{J_\nu(z)\, \cos(\nu\pi) - J_{-\nu}(z)}{\sin(\nu \pi)}, \qquad
J_\nu(z) = (z/2)^\nu\, \sum_{k=0}^\infty \, \frac{(-1)^k\, z^{2k}}{4^k\, k!\ \Gamma(\nu+k+1)},
$$
where $\nu\not\in\Z$, see \cite[Eqs. 9.1.2, 9.1.10]{as}, and the well-known properties of the Gamma function, namely
\begin{equation} \label{gamma-1}
\Gamma(z+1)=z\, \Gamma (z), \qquad \Gamma(z)\, \Gamma(1-z) =  \frac{\pi}{\sin(\pi z)},
\end{equation}
we can further rewrite \eqref{r0-kernel} in the power-series in $\lambda$ as follows:
\begin{align} \label{serie}
& R_0(\alpha, \lambda, x,y)  = G_0(x,y)  +  G_1(x,y)\, \lambda^{|\alpha|}\, + G_2(x,y) \, \lambda^{|\alpha-\sgn\alpha|}\\
& \quad +  \frac {1}{4\pi}\, \sum_{m\in \Z} \, e^{im(\theta-\theta')}\,  \Big(\frac{r}{r'}\Big)^{|\alpha+m|} \sum_{k,n\geq 0, k+n>0}\, \frac{(\frac 14\, r'^2 )^k\, (-\frac 14\, r^2)^n\, \lambda^{k+n}}{n!\, k!\, (n+|\alpha+m|)\cdots |\alpha+m|\cdots (|\alpha+m|-k)} \nonumber \\
& \quad +  \frac {i-\cot(\pi |\alpha|)}{4^{1+|\alpha|}}\,  \sum_{k,n\geq 0, k+n>0}\, \frac{(-\frac 14\, r'^2 )^k\, (-\frac 14\, r^2)^n\, (r r')^{|\alpha|} \, \lambda^{k+n+|\alpha|}}{\Gamma( n+|\alpha|+1)\, \Gamma(k+|\alpha|+1)} \nonumber \\
& \quad +  \sum_{m\neq 0} \, e^{im(\theta-\theta')}\, \frac{i-\cot(\pi |\alpha+m|)}{4^{1+|\alpha+m|}}\, \sum_{k,n\geq 0, k+n >0}\, \frac{(-\frac 14\, r'^2 )^k\, (-\frac 14\, r^2)^n\, (r r')^{|\alpha+m|}\, \lambda^{k+n+|\alpha+m|}}{\Gamma( n+|\alpha+m|+1)\, \Gamma(k+|\alpha+m|+1)} , \nonumber
\end{align}
where $G_0(x,y)$ is given by \eqref{g0} and
\begin{align*}
G_1(x,y) & = \frac{i-\cot(\pi |\alpha|)}{4\, \Gamma^2(1+|\alpha|)}\,  \Big(\frac {r r'}{4}\Big) ^{|\alpha|}, \\
G_2(x,y)  &=  e^{i (\theta'-\theta) \sgn\alpha}\, \, \frac{ i-\cot(\pi |\alpha-\sgn\alpha|)}{4 \, \Gamma^2(1+|\alpha-\sgn\alpha|)}\,  \Big(\frac {r r'}{4}\Big)^{|\alpha-\sgn\alpha|}  .
\end{align*}
Denote by $G_j, \, j=1,2$  the respective integral operators generated by the kernels $G_j(x,y)$. The following technical Lemma will be needed later for the asymptotic expansion of $R_0(\alpha, \lambda)$.

\begin{lemma} \label{lem-aux}
Let $a>1$. Then
$$
\int_0^\infty\, e^{-s^4}\, s^{4a}\, s\, ds \, \leq\,  \frac{a}{4}\ \Gamma(a).
$$
\end{lemma}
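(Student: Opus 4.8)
The plan is to evaluate the integral in closed form and then reduce the asserted bound to an elementary monotonicity property of the Gamma function.

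First I would perform the substitution $u=s^4$, so that $du=4s^3\,ds$ and $s=u^{1/4}$. Rewriting the integrand as $e^{-s^4}\,s^{4a+1}\,ds$ and using $s^{4a+1}=u^{a+1/4}$ together with $ds=\tfrac14\,u^{-3/4}\,du$, one obtains
\[
\int_0^\infty e^{-s^4}\,s^{4a}\,s\,ds \;=\; \frac14\int_0^\infty e^{-u}\,u^{a-1/2}\,du \;=\; \frac14\,\Gamma\!\Big(a+\tfrac12\Big),
\]
the last step being just the definition of the Gamma function; the integral converges since $a>1>-\tfrac12$. Thus the integral is known exactly, and no estimate is actually lost at this stage.

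It then remains to check that $\tfrac14\,\Gamma(a+\tfrac12)\le \tfrac{a}{4}\,\Gamma(a)$. Using the functional equation $\Gamma(a+1)=a\,\Gamma(a)$ recorded in \eqref{gamma-1}, this is equivalent to
\[
\Gamma\!\Big(a+\tfrac12\Big)\;\le\;\Gamma(a+1).
\]
For $a>1$ both arguments lie in $[\tfrac32,\infty)$, and since $a+\tfrac12<a+1$ the inequality follows immediately from the fact that $\Gamma$ is increasing on $[\tfrac32,\infty)$ (its unique minimum on $(0,\infty)$ is located at a point strictly smaller than $\tfrac32$). This yields the claim.

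I do not anticipate any genuine obstacle; the only mild point is justifying the monotonicity of $\Gamma$ on $[\tfrac32,\infty)$. A clean rigorous route is to use that the digamma function $\psi=(\log\Gamma)'$ is increasing on $(0,\infty)$ (its derivative, the trigamma function, is positive) together with $\psi(\tfrac32)=2-\gamma-2\log 2>0$, where $\gamma$ is the Euler--Mascheroni constant; hence $\psi>0$, i.e. $\Gamma'>0$, throughout $[\tfrac32,\infty)$, which gives the required monotonicity.
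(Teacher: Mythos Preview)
Your proof is correct and follows essentially the same route as the paper: the same substitution $u=s^4$ yields $\tfrac14\,\Gamma(a+\tfrac12)$, and the paper then bounds this by $\tfrac14\,\Gamma(a+1)=\tfrac a4\,\Gamma(a)$ exactly as you do. The only difference is that you supply an explicit justification, via the digamma function, for the monotonicity of $\Gamma$ on $[\tfrac32,\infty)$, whereas the paper simply asserts $\Gamma(a+\tfrac12)\le\Gamma(a+1)$.
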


\begin{proof}
With the substitution $s^4=t$ we obtain
\begin{align*}
\int_0^\infty\, e^{-s^4}\, s^{4a}\, s\, ds & = \frac 14\, \int_0^\infty\, e^{-t}\,  t^{a-\frac 12}\, dt = \frac 14\, \Gamma\left (a+\frac 12 \right) \leq \frac 14\, \Gamma(a+1) = \frac a4\, \Gamma(a),
\end{align*}
where we have used the first part of \eqref{gamma-1}.
\end{proof}

\begin{lemma} \label{lem-1}
The series of integral operators \eqref{serie} converges absolutely in $\B(\rho, \rho^{-1})$ and uniformly in $\lambda$ on compacts of $[0,\infty)$. In particular, we have
\begin{equation} \label{R0-taylor1}
R_0(\alpha, \lambda)= G_0 + G_1\, \lambda^{|\alpha|} + G_2\, \lambda^{|\alpha-\sgn\alpha|} + o(\lambda^{|\alpha-\sgn\alpha|}) \qquad \text{in} \  \ \B(\rho, \rho^{-1}).
\end{equation}
as $\lambda \to 0$.
Moreover, the operator $R_0(\alpha, \lambda)$ generated by the integral kernel \eqref{serie} can be differentiated in $\lambda$ on $(0,1)$ any number of times in the norm of  $\B(\rho, \rho^{-1})$ and it holds
\begin{equation} \label{R0-der}
R_0'(\alpha,\lambda) =  |\alpha|\,  G_1\,  \lambda^{|\alpha|-1} + o(\lambda^{|\alpha|-1}), \quad  R_0''(\alpha,\lambda) =  |\alpha| (|\alpha|-1) G_1\,  \lambda^{|\alpha|-2} + o(\lambda^{|\alpha|-2}),
\end{equation}
where $R_0'(\alpha,\lambda)$ and $R_0''(\alpha,\lambda)$ denote first and second derivative of $R_0(\alpha,\lambda)$ with respect to $\lambda$ in the norm of $\B(\rho, \rho^{-1})$.
\end{lemma}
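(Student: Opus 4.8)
The plan is to estimate the $\B(\rho,\rho^{-1})$ norm of every individual summand of \eqref{serie} by its weighted Hilbert--Schmidt norm, and then to sum the resulting bounds; the quartic growth of $\rho$ is precisely what makes the total series summable. First I would recall that, by the formula \eqref{hs-norm} for the weighted Hilbert--Schmidt norm, any integral operator $K$ with kernel $K(x,y)$ satisfies
$$
\|K\|_{\B(\rho,\rho^{-1})}^2 \ \leq\ \int_{\R^2}\!\int_{\R^2} \rho^{-1}(x)\, |K(x,y)|^2\, \rho^{-1}(y)\, dx\, dy .
$$
Passing to polar coordinates and using that $\rho^{-1}(x)=e^{-r^4}$ depends on $r$ alone, the angular factors $e^{im(\theta-\theta')}$ appearing in \eqref{serie} are mutually orthogonal, so the Hilbert--Schmidt norm of each summand reduces to a product of two radial integrals of the form $\int_0^\infty r^{4j+1}\, e^{-r^4}\, dr$.

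This is the point where the weight enters decisively. By Lemma \ref{lem-aux} (applied with $a=j$) each such integral is bounded by $\tfrac14\, j!$ for $j\geq1$, and is a finite constant for the finitely many remaining values $j=0,1$; thus the radial integrals grow only like $\sqrt{n!\,k!}$, i.e. like a $\Gamma$-function of roughly half the naive polynomial degree. Had we used a merely Gaussian weight these integrals would instead grow like $n!\,k!$ and the argument below would break down, which is the structural reason for the choice $\rho=e^{|x|^4}$.

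Carrying this out on the generic summand $T_{m,k,n}$ of the triple series in \eqref{serie}, where on $\{r<r'\}$ one bounds $(r/r')^{|\alpha+m|}\leq1$, one obtains
$$
\|T_{m,k,n}\|_{\B(\rho,\rho^{-1})}\ \lesssim\ \frac{\lambda^{k+n}}{4^{k+n}\,\sqrt{n!\,k!}\ \big|(n+|\alpha+m|)\cdots|\alpha+m|\cdots(|\alpha+m|-k)\big|},
$$
and the last two sums in \eqref{serie} are handled identically, their $\Gamma(n+|\alpha+m|+1)\,\Gamma(k+|\alpha+m|+1)$ denominators playing the same role (here one also uses that $|\alpha+m|$ stays at distance $\geq|\alpha|$ from $\Z$, so the factors $\cot(\pi|\alpha+m|)$ are uniformly bounded). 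Summing first in $n,k$ gives a convergent double series dominated by $\sum_{n,k}(\lambda/4)^{k+n}(n!\,k!)^{-1/2}$; for fixed $k+n=N\geq1$ the denominator grows like $|m|^{N+1}$, so $\sum_{m}|m|^{-(N+1)}<\infty$ absorbs the sum over $m\in\Z$. Since $\lambda^{k+n+|\alpha+m|}$ stays bounded on compacts of $[0,\infty)$, all bounds are uniform there, which proves the asserted absolute, locally uniform convergence in $\B(\rho,\rho^{-1})$. The expansion \eqref{R0-taylor1} then follows by bookkeeping the powers of $\lambda$: the three lowest contributions are $G_0$ (power $\lambda^0$), $G_1\,\lambda^{|\alpha|}$, and $G_2\,\lambda^{|\alpha-\sgn\alpha|}$ with $|\alpha-\sgn\alpha|=1-|\alpha|$, while every remaining summand carries a power $\lambda^\beta$ with $\beta\geq1>1-|\alpha|$ (the triple sum begins at $\lambda^1$, the other two at strictly higher powers), so the tail is $O(\lambda)=o(\lambda^{|\alpha-\sgn\alpha|})$.

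Finally, the differentiated series is produced term by term: $\partial_\lambda$ multiplies the $(m,k,n)$-term by its $\lambda$-exponent divided by $\lambda$, lowering the power by one and inserting a factor that grows only polynomially in $k+n+|\alpha+m|$, hence absorbed by the same denominators. The differentiated series therefore converges absolutely and uniformly on compacts of $(0,1)$ --- one must exclude $\lambda=0$ because negative powers of $\lambda$ now appear --- which legitimizes term-by-term differentiation to any order; the most singular contribution comes from differentiating $G_1\,\lambda^{|\alpha|}$, all other differentiated terms being of strictly lower order as $\lambda\to0$, giving \eqref{R0-der}. I expect the main obstacle to be the careful tracking of the $\Gamma$-growth needed to confirm that the factorial denominators of \eqref{serie} dominate the radial integrals supplied by Lemma \ref{lem-aux} simultaneously in $n,k$ and $m$; once this domination is secured, the Taylor expansion and the differentiability are essentially bookkeeping.
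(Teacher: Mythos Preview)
Your proposal is correct and rests on the same two ingredients as the paper's proof: the weighted Hilbert--Schmidt bound for $\B(\rho,\rho^{-1})$ and Lemma~\ref{lem-aux} to control the radial moments $\int_0^\infty r^{4a+1}e^{-r^4}\,dr$. The organization, however, differs in one noteworthy respect. The paper first exploits the Fourier decomposition $R_0(\alpha,\lambda,x,y)=\sum_m R_0^m(\alpha,\lambda,r,r')\,e^{im(\theta-\theta')}$ together with angular orthogonality to reduce the $\B(\rho,\rho^{-1})$ norm to a \emph{supremum} over $m$ of the radial Hilbert--Schmidt norms; it then bounds, for each fixed $m$, the full double series in $(k,n)$ at once via a single Cauchy--Schwarz step, obtaining an $m$-independent bound of order $\lambda^2 e^{\lambda^2}$. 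In particular the paper never needs to sum over $m$. You instead estimate each individual summand $T_{m,k,n}$ and then sum over all three indices, which forces you to supply the additional argument that $|P_{k,n}(m)|\gtrsim |m|^{k+n+1}$ so that $\sum_m$ converges. Both routes are valid; the paper's is a bit slicker (the orthogonality trick removes the $m$-sum entirely), while yours is more elementary and arguably matches the phrase ``converges absolutely'' more literally, since you actually bound $\sum_{m,k,n}\|T_{m,k,n}\|$.
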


\begin{proof}
From \eqref{r0-kernel} we see that $R_0(\alpha, \lambda, x,y)$ can be written in the form
$$
R_0(\alpha, \lambda, x,y) = \sum_{m\in\Z} R^m_0(\alpha, \lambda, r, r')\  e^{im(\theta-\theta')},
$$
where we can identify the $R^m_0(\alpha, \lambda, r, r')$ with the terms on the right hand side of \eqref{serie}. The above formula implies that
\begin{align}
\|R_0(\alpha, \lambda)\|^2_{\B(\rho, \rho^{-1})}  & = \sup_{m\in\Z}\, \|\rho^{-1} \, R^m_0(\alpha, \lambda)\, \rho^{-1} \|^2_{L^2(\R_+, rdr)\to L^2(\R_+, r dr)} \nonumber \\
& \leq \int_0^\infty \int_0^\infty |R^m_0(\alpha, \lambda, r, r')|^2 \, \rho(r)^{-2}\, \rho(r')^{-2}\, r\, r'\, dr dr',
\label{hs-norm}
\end{align}
where we have used the Hilbert-Schmidt norm on $L^2(\R_+, r dr)$.
It is now easily verified that the series of operators on the right hand side of \eqref{serie} converges absolutely in $\B(\rho, \rho^{-1})$. Indeed,
the square of the first double-series on the right hand side of \eqref{serie} can be estimates by Cauchy-Schwarz inequality as follows
\begin{align}
&\left( \sum_{k,n\geq 0, k+n>0}\, \frac{(\frac 14\, r'^2 )^k\, (-\frac 14\, r^2)^n\, \lambda^{k+n}}{n!\, k!\, (n+|\alpha+m|)\cdots |\alpha+m|\cdots (|\alpha+m|-k)}\right)^2 \leq  \label{hs-aux} \\
& \qquad\qquad \qquad\qquad \qquad\qquad \qquad\qquad \qquad \qquad \qquad
\leq    \sum_{k > 0} \frac{  (\sqrt{\lambda}\ r')^{4k} }{(k!)^2} \  \sum_{n\geq 0} \frac{  (\sqrt{\lambda}\ r)^{4n} }{(n!)^2} .\nonumber
\end{align}
Hence using Lemma \ref{lem-aux} we find that the Hilbert-Schmidt norm of the kernel on the left hand side of \eqref{hs-aux} is bounded from above by a constant times $\lambda^2\, e^{\lambda^2}$. The remaining terms in \eqref{serie} are treated in the same way. In view of \eqref{hs-norm} we thus conclude that \eqref{serie} converges absolutely in $\B(\rho, \rho^{-1})$ for any $\lambda \geq 0$ and that the convergence is uniform in $\lambda$ on any compact interval of $[0,\infty)$. The same argument applies to $\frac{ d^k}{d \lambda^k} R_0(\alpha, \lambda)$ for any $k\in\N$. This implies \eqref{R0-der}.
\end{proof}

%%%%%%%%%%%%%%%%%%%%%%%%%%%%%%%%%%%%%%%%%%%%%%%%%%%%%%%%
\subsubsection*{\bf The perturbed resolvent}

Next we consider the full resolvent $R(\alpha, z) = (H-z)^{-1}$ of the operator $H=H_\alpha +V$ in $L^2(\R^2)$. Since the contribution to the second term of the expansion for $R_0(\alpha,\lambda)$ is different when $|\alpha| < 1/2$ and when $|\alpha| =1/2$, in order to simplify the notation in the following Lemmas we introduce the symbol
\begin{equation} \label{s-alpha}
\sigma_\alpha = 0 \quad \text{if} \quad |\alpha| < 1/2, \qquad \sigma_\alpha = 1  \quad \text{if} \quad |\alpha| =1/2.
\end{equation}

\begin{lemma} \label{lem-exp1}
Under Assumption \ref{ass-V} we have
\begin{equation} \label{exp-1}
(1+R_0(\alpha,\lambda)\, V)^{-1} = (1+G_0 V)^{-1}-(1+G_0 V)^{-1}\, (G_1+\sigma_\alpha\, G_2)\, V\, (1+G_0 V)^{-1}\, \lambda^{|\alpha|} + o(\lambda^{|\alpha|})
\end{equation}
as  $\lambda \to 0$ in $\B(\rho^{-1}, \rho^{-1})$.
\end{lemma}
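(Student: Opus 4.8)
The plan is to reduce \eqref{exp-1} to a Neumann-series perturbation of the invertible operator $1+G_0V$. The first task is therefore to record that $1+G_0V$ is invertible in $\B(\rho^{-1},\rho^{-1})$. Since $V$ is bounded with compact support it maps $L^2(\R^2,\rho^{-1})$ boundedly into $L^2(\R^2,\rho)$, i.e.\ $V\in\B(\rho^{-1},\rho)$, and composing with $G_0\in\B(\rho,\rho^{-1})$ shows $G_0V\in\B(\rho^{-1},\rho^{-1})$. A Hilbert--Schmidt estimate analogous to \eqref{hs-norm}, exploiting the compact support of $V$ and the very fast decay of $\rho^{-1}=e^{-|x|^4}$ against the at-most-algebraic growth of $G_0(x,y)$, shows that $G_0V$ is in fact a Hilbert--Schmidt, hence compact, operator on $L^2(\R^2,\rho^{-1})$. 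By the Fredholm alternative it then suffices to verify that $1+G_0V$ has trivial kernel on $L^2(\R^2,\rho^{-1})$, which is precisely the no-resonance condition \eqref{V-as2} of Assumption \ref{ass-V}. Thus $(1+G_0V)^{-1}\in\B(\rho^{-1},\rho^{-1})$ exists.

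The second step is to recast the resolvent expansion \eqref{R0-taylor1} into a single form valid in both regimes. When $|\alpha|<1/2$ one has $|\alpha-\sgn\alpha|=1-|\alpha|>|\alpha|$, so the $G_2$-term in \eqref{R0-taylor1} is $o(\lambda^{|\alpha|})$; when $|\alpha|=1/2$ one has $|\alpha-\sgn\alpha|=1/2=|\alpha|$, so the $G_1$- and $G_2$-terms carry the same power $\lambda^{|\alpha|}$. With the symbol $\sigma_\alpha$ from \eqref{s-alpha} this reads uniformly
\[
R_0(\alpha,\lambda)=G_0+(G_1+\sigma_\alpha G_2)\,\lambda^{|\alpha|}+o(\lambda^{|\alpha|})\qquad\text{in }\B(\rho,\rho^{-1}),
\]
and multiplying on the right by $V\in\B(\rho^{-1},\rho)$ gives
\[
1+R_0(\alpha,\lambda)\,V=(1+G_0V)+(G_1+\sigma_\alpha G_2)\,V\,\lambda^{|\alpha|}+o(\lambda^{|\alpha|})\qquad\text{in }\B(\rho^{-1},\rho^{-1}).
\]

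The final step is the inversion. Factoring out the invertible operator $1+G_0V$ on the left, I would write
\[
1+R_0(\alpha,\lambda)\,V=(1+G_0V)\big(1+B(\lambda)\big),\qquad B(\lambda):=(1+G_0V)^{-1}(G_1+\sigma_\alpha G_2)V\,\lambda^{|\alpha|}+o(\lambda^{|\alpha|}).
\]
Since $\|B(\lambda)\|_{\B(\rho^{-1},\rho^{-1})}\to0$ as $\lambda\to0$, the Neumann series converges for small $\lambda$ and gives $(1+B(\lambda))^{-1}=1-B(\lambda)+o(\lambda^{|\alpha|})$, because $B(\lambda)^2=O(\lambda^{2|\alpha|})=o(\lambda^{|\alpha|})$ owing to $|\alpha|>0$. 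Composing on the right with $(1+G_0V)^{-1}$ and retaining only terms up to order $\lambda^{|\alpha|}$ yields exactly \eqref{exp-1}.

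As for the difficulty, the routine part is the Neumann expansion, which is essentially forced once invertibility of $1+G_0V$ is in hand. The one genuinely delicate point is that invertibility step: establishing compactness of $G_0V$ in the weighted space carefully enough to invoke the Fredholm alternative, thereby upgrading the kernel hypothesis \eqref{V-as2} into a bona fide bounded inverse. I would also take care with the bookkeeping of mapping properties, checking that each product lands in the correct $\B(\cdot,\cdot)$ class so that all remainders are genuinely $o(\lambda^{|\alpha|})$ in the norm of $\B(\rho^{-1},\rho^{-1})$; the dual statement for $1+VG_0$ in $\B(\rho,\rho)$ then follows by the same argument applied to adjoints, as indicated in Remark \ref{dual}.
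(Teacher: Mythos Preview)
Your proof is correct and follows essentially the same approach as the paper: establish compactness of $G_0V$ in $\B(\rho^{-1},\rho^{-1})$, invoke the Fredholm alternative via \eqref{V-as2} to obtain invertibility of $1+G_0V$, use \eqref{R0-taylor1} together with $V\in\B(\rho^{-1},\rho)$ to expand $1+R_0(\alpha,\lambda)V$, and finish with a Neumann-series inversion. Your write-up is in fact more detailed than the paper's (which compresses the last two steps into a single sentence), but the logic is identical.
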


\begin{proof}
The operators $G_0, G_1 V$ and $G_2 V$ are Hilbert-Schmidt, and therefore compact, from $L^2(\R^2,\rho^{-1})$ to $L^2(\R^2,\rho^{-1})$. Hence by \eqref{V-as2} the operator $1+ G_0 V$ is invertible in $\B(\rho^{-1}, \rho^{-1})$. On the other hand, in view of \eqref{R0-taylor1} we have
\begin{equation}
1+R_0(\alpha,\lambda)\, V = 1+G_0 V + (G_1+\sigma_\alpha\, G_2) V\, \lambda^{|\alpha|} + o(\lambda^{|\alpha|})
\end{equation}
in $\B(\rho^{-1}, \rho^{-1})$. Here we have used the fact that $V\in\B(\rho^{-1}, \rho)$. It follows that for $\lambda$ small enough the operator $1+R_0(\alpha,\lambda)\, V$ can be inverted and with the help of the
Neumann series we arrive at \eqref{exp-1}.
\end{proof}

\begin{remark} \label{dual}
The operator $V G_0$ is compact from $L^2(\R^2,\rho)$ to $L^2(\R^2,\rho)$. Hence
by equation \eqref{V-as2} and duality $1+V G_0$ is invertible in $\B(\rho, \rho)$.
\end{remark}

\begin{lemma} \label{lem-exp2}
Under Assumption \ref{ass-V} we have
\begin{equation} \label{exp-2}
R(\alpha,\lambda) = T_0 + T_1\, \lambda^{|\alpha|}+ o(\lambda^{|\alpha|}) \
\end{equation}
as $\lambda \to 0$ in $\B(\rho, \rho^{-1})$, where
\begin{equation}
T_0= (1+G_0 V)^{-1}\, G_0, \qquad T_1 = (1+G_0 V)^{-1}\, (G_1+\sigma_\alpha\, G_2)\, (1+V G_0 )^{-1} .
\end{equation}	
\end{lemma}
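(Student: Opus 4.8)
The plan is to start from the second resolvent identity. Since $H = H_\alpha + V$, writing $R_0 = R_0(\alpha,\lambda)$ and $R = R(\alpha,\lambda)$ one has $(1 + R_0 V)\, R = R_0$, and therefore
\[
R(\alpha,\lambda) = \big(1 + R_0(\alpha,\lambda)\, V\big)^{-1} R_0(\alpha,\lambda).
\]
I would first check that this factorisation is meaningful in $\B(\rho,\rho^{-1})$: by Lemma \ref{lem-1} we have $R_0(\alpha,\lambda) \in \B(\rho,\rho^{-1})$, while Lemma \ref{lem-exp1} gives $(1 + R_0(\alpha,\lambda)\, V)^{-1} \in \B(\rho^{-1},\rho^{-1})$ for $\lambda$ small. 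The composition thus sends $L^2(\R^2,\rho)$ into $L^2(\R^2,\rho^{-1})$, as required.

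Next I would insert the two available expansions. From \eqref{R0-taylor1}, recalling via \eqref{s-alpha} that $G_2\, \lambda^{|\alpha-\sgn\alpha|}$ is of strictly higher order when $|\alpha|<1/2$ but of the same order $\lambda^{|\alpha|}$ when $|\alpha|=1/2$, we may write
\[
R_0(\alpha,\lambda) = G_0 + (G_1 + \sigma_\alpha\, G_2)\, \lambda^{|\alpha|} + o(\lambda^{|\alpha|}) \qquad \text{in } \B(\rho,\rho^{-1}).
\]
Combining this with \eqref{exp-1} for the inverse factor, I would multiply the two expansions and collect powers of $\lambda^{|\alpha|}$. The order $\lambda^0$ term is $T_0 = (1 + G_0 V)^{-1} G_0$, and the coefficient of $\lambda^{|\alpha|}$ equals
\[
(1 + G_0 V)^{-1} (G_1 + \sigma_\alpha\, G_2)\, \big[\, I - V (1 + G_0 V)^{-1} G_0\, \big].
\]

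The remaining step is to simplify the bracket by means of the algebraic identity $I - V(1 + G_0 V)^{-1} G_0 = (1 + V G_0)^{-1}$. I would verify it by multiplying on the left by $1 + V G_0$, which is invertible in $\B(\rho,\rho)$ by Remark \ref{dual}, and using $(1 + G_0 V)^{-1} + G_0 V (1 + G_0 V)^{-1} = I$; the terms $V G_0$ then cancel and one is left with the identity operator. Substituting gives precisely $T_1 = (1 + G_0 V)^{-1}(G_1 + \sigma_\alpha\, G_2)(1 + V G_0)^{-1}$, which is \eqref{exp-2}.

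The only genuinely non-algebraic point, and the step I expect to require the most care, is the justification that the remainders multiply as claimed. Here one uses $V \in \B(\rho^{-1},\rho)$ and $G_j \in \B(\rho,\rho^{-1})$ together with the boundedness of the resolvent factors in $\B(\rho^{-1},\rho^{-1})$: composing a fixed bounded operator in the matching space with an $o(\lambda^{|\alpha|})$ remainder again yields an $o(\lambda^{|\alpha|})$ remainder, now in $\B(\rho,\rho^{-1})$. Once the products of the error terms are controlled in this fashion, collecting the contributions is routine bookkeeping and \eqref{exp-2} follows.
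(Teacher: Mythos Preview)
Your proposal is correct and follows essentially the same approach as the paper: it uses the factorisation $R=(1+R_0V)^{-1}R_0$, inserts the expansions \eqref{R0-taylor1} and \eqref{exp-1}, and then simplifies the $\lambda^{|\alpha|}$-coefficient via the identity $I-V(1+G_0V)^{-1}G_0=(1+VG_0)^{-1}$. The paper proves this last identity in the equivalent form $V(1+G_0V)^{-1}G_0=(1+VG_0)^{-1}VG_0$, but the content is the same.
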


\begin{proof}
Since $1+R_0(\alpha,\lambda)\, V$ is invertible in $\B(\rho^{-1}, \rho^{-1})$ for $\lambda$ small enough, the
resolvent equation yields
\begin{equation} \label{2-res}
R(\alpha,\lambda)=(1+R_0(\alpha,\lambda)\, V)^{-1}\, R_0(\alpha,\lambda),
\end{equation}
which in combination with \eqref{R0-taylor1} and \eqref{exp-1} gives equation \eqref{exp-2} with
$$
T_1 = (1+G_0 V)^{-1}\, (G_1+\sigma_\alpha\, G_2) -(1+ G_0 V )^{-1}\, (G_1+\sigma_\alpha\, G_2) \, V\, (1+G_0 V)^{-1} \, G_0
$$
in $\B(\rho, \rho^{-1})$. To simplify the above expression for $T_1$ let us consider $u\in L^2(\R^2, \rho)$ and denote $f=G_0\, u \in L^2(\R^2, \rho^{-1})$. Since $(1+G_0 V)^{-1}\, f =g$ if and only if $f= g + G_0 V  g$, and $(1+V G_0 )^{-1} \, V f = V g$ if an only if
$ V f = Vg + V G_0 V g$, we find out that $V\, (1+G_0 V)^{-1} \, G_0 u = (1+V G_0 )^{-1} \, V G_0 u$. Hence the identity
\begin{equation}
V\, (1+G_0 V)^{-1} \, G_0 = (1+V G_0 )^{-1} \, V G_0
\end{equation}
holds on $L^2(\R^2, \rho)$, which implies that
$$
T_1 =  (1+G_0 V)^{-1}\, (G_1+\sigma_\alpha\, G_2)\, (1+V G_0 )^{-1}.
$$
\end{proof}

\begin{corollary} \label{cor-der-0}
As $\lambda\to 0$,
\begin{equation} \label{R-der}
R''(\alpha,\lambda) =  |\alpha| (|\alpha|-1) (1-T_0 V)\, G_1\, (1-T_0 V)\, \lambda^{|\alpha|-2} +o(\lambda^{|\alpha|-2})  \quad \text{in} \  \ \B(\rho, \rho^{-1}).
\end{equation}
\end{corollary}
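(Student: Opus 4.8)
The plan is to differentiate the resolvent identity \eqref{2-res} twice and retain only the most singular term as $\lambda\to 0$. Write $M(\lambda)=1+R_0(\alpha,\lambda)\,V$ and $N(\lambda)=M(\lambda)^{-1}$, so that $R(\alpha,\lambda)=N(\lambda)\,R_0(\alpha,\lambda)$. By Lemma \ref{lem-1} the map $\lambda\mapsto R_0(\alpha,\lambda)$ is infinitely differentiable in $\B(\rho,\rho^{-1})$ on $(0,1)$; since $V\in\B(\rho^{-1},\rho)$ is fixed, $M$ is twice differentiable in the Banach algebra $\B(\rho^{-1},\rho^{-1})$ with $M'=R_0'(\alpha,\lambda)\,V$ and $M''=R_0''(\alpha,\lambda)\,V$. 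Moreover $M$ is invertible for small $\lambda$ by Lemma \ref{lem-exp1}, so $N$ is twice differentiable and the standard formulas $N'=-N M' N$ and $N''=2\,N M' N M' N-N M'' N$ hold in $\B(\rho^{-1},\rho^{-1})$.

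Applying the product rule gives
\[
R''(\alpha,\lambda)=N''\,R_0+2\,N'\,R_0'+N\,R_0'' .
\]
Now I insert the asymptotics from \eqref{R0-der}, namely $R_0'=|\alpha|\,G_1\,\lambda^{|\alpha|-1}+o(\lambda^{|\alpha|-1})$ and $R_0''=|\alpha|(|\alpha|-1)\,G_1\,\lambda^{|\alpha|-2}+o(\lambda^{|\alpha|-2})$, together with $N\to(1+G_0V)^{-1}$ and $R_0\to G_0$ as $\lambda\to 0$ (Lemma \ref{lem-exp1} and \eqref{R0-taylor1}). It follows that $M'=O(\lambda^{|\alpha|-1})$ and $M''=|\alpha|(|\alpha|-1)\,G_1 V\,\lambda^{|\alpha|-2}+o(\lambda^{|\alpha|-2})$, hence $N'=O(\lambda^{|\alpha|-1})$ and $N''=-NM''N+2NM'NM'N=-|\alpha|(|\alpha|-1)(1+G_0V)^{-1}G_1V(1+G_0V)^{-1}\lambda^{|\alpha|-2}+o(\lambda^{|\alpha|-2})$.

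The decisive point is the order counting. Since $0<|\alpha|\le 1/2$, the product $N M' N M' N$ is of order $\lambda^{2(|\alpha|-1)}=\lambda^{|\alpha|-2}\cdot\lambda^{|\alpha|}=o(\lambda^{|\alpha|-2})$, the cross term $N'R_0'$ is $O(\lambda^{2|\alpha|-2})=o(\lambda^{|\alpha|-2})$, and $N''(R_0-G_0)$ is $O(\lambda^{|\alpha|-2})\cdot O(\lambda^{|\alpha|})=o(\lambda^{|\alpha|-2})$. Thus only $N''G_0$ and $N R_0''$ contribute at the leading order $\lambda^{|\alpha|-2}$, yielding
\[
R''(\alpha,\lambda)=|\alpha|(|\alpha|-1)\,(1+G_0V)^{-1}\big[\,G_1-G_1V(1+G_0V)^{-1}G_0\,\big]\,\lambda^{|\alpha|-2}+o(\lambda^{|\alpha|-2}) .
\]

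It remains to simplify the bracket. Using the identity $V(1+G_0V)^{-1}G_0=(1+VG_0)^{-1}VG_0$ established in the proof of Lemma \ref{lem-exp2} (that is, $VT_0=(1+VG_0)^{-1}VG_0$), one gets $G_1-G_1V(1+G_0V)^{-1}G_0=G_1\big(1-VT_0\big)=G_1(1+VG_0)^{-1}$, the last step following from $(1+VG_0)^{-1}+(1+VG_0)^{-1}VG_0=1$. The same algebra applied on the left gives $(1+G_0V)^{-1}=1-T_0V$. Combining, the leading coefficient equals $(1+G_0V)^{-1}G_1(1+VG_0)^{-1}=(1-T_0V)\,G_1\,(1-VT_0)$, which is precisely the operator in \eqref{R-der} (the right-hand factor being read as $1-VT_0\in\B(\rho,\rho)$, the natural counterpart of $1-T_0V\in\B(\rho^{-1},\rho^{-1})$). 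This proves \eqref{R-der}. The only genuine \emph{obstacle} is the norm bookkeeping that licenses discarding the $o(\lambda^{|\alpha|-2})$ terms; this is supplied by the absolute convergence and term-by-term differentiability in $\B(\rho,\rho^{-1})$ of Lemma \ref{lem-1}, together with the strict inequality $|\alpha|>0$, which separates the exponents $\lambda^{|\alpha|-2}$ and $\lambda^{2|\alpha|-2}$.
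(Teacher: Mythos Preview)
Your proof is correct and follows essentially the same approach as the paper: both differentiate the second resolvent relation twice and keep only the $\lambda^{|\alpha|-2}$ contribution coming from $R_0''$. The paper organizes the computation via the symmetric identities $R'=(1-RV)R_0'(1-VR)$ and $R''=(1-RV)R_0''(1-VR)-2R'VR_0'(1-VR)$, which lead to the leading term in one line, whereas you differentiate $R=(1+R_0V)^{-1}R_0$ directly and then simplify; the two computations are algebraically equivalent since $1-RV=(1+R_0V)^{-1}$. Your remark that the right factor is naturally $1-VT_0\in\B(\rho,\rho)$ (i.e.\ $(1+VG_0)^{-1}$) is also correct and matches what the paper's own derivation actually produces.
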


\begin{proof}
We use the identities
\begin{align}
R'(\alpha,\lambda) & = (1- R(\alpha,\lambda)V) \, R_0'(\alpha,\lambda)\, (1-V R(\alpha,\lambda) ) \label{R'} \\
R''(\alpha,\lambda) & =  (1-R(\alpha,\lambda) V) \, R_0''(\alpha,\lambda)\, (1-V R(\alpha,\lambda) ) -2 R'(\alpha,\lambda) \, V \,  R_0'(\alpha,\lambda) (1-V R(\alpha,\lambda)), \label{R''}
\end{align}
which hold in $B(\rho, \rho^{-1})$ in view of the resolvent equation $R=R_0- R_0\, V R$. The claim now follows from Lemma \ref{lem-1} and Lemma \ref{lem-exp2}.
\end{proof}

%%%%%%%%%%%%%%%%%%%%%%%%%%%%%%%%%

\subsection{ High energy behavior}
Here we will use the operator norms in the weighted spaces $L^2(\R^2, w^s)$.
When $s=0$ we write $\|u\|_0$ instead of $ \|  u\|_{0,0}$.
We denote by $\B(s, s')$ the set of bounded operators from $L^2(\R^2, w^s)$ to $L^2(\R^2, w^{s'})$.  Obviously, for any  $K\in \B(s,s')$ we have
\begin{equation} \label{imbed}
 p' \leq s' \leq s \leq p \quad \Rightarrow \quad \| K \|_{\B(p,p')} \,  \leq \, \| K \|_{\B(s,s')}.
\end{equation}

\begin{lemma} \label{lem-cont}
If $s>1/2$, then $R_0(\alpha, \lambda)$ is continuous in $\lambda$ on $(0,\infty)$ in the norm of $\B(s,-s)$.
\end{lemma}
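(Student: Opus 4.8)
The plan is to deduce continuity of the boundary values $\lambda\mapsto R_0(\alpha,\lambda)$ from the limiting absorption principle already recorded in \eqref{it-limit}, by exhibiting $R_0(\alpha,\lambda)$ as a locally uniform limit of the continuous interior resolvents $R_0(\alpha,\lambda+i\eps)$.

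First I would fix $\eps>0$ and verify that the map $\lambda\mapsto R_0(\alpha,\lambda+i\eps)$ is continuous on $(0,\infty)$ in the norm of $\B(s,-s)$. Since $H_\alpha$ is self-adjoint and non-negative, its spectrum is contained in $[0,\infty)$, so for $z,z'\in\C_+$ the resolvent identity $R_0(\alpha,z)-R_0(\alpha,z') = (z-z')\,R_0(\alpha,z)\,R_0(\alpha,z')$ combined with the bound $\|R_0(\alpha,z)\|_{\B(0,0)}\le (\im z)^{-1}$ yields
\[
\|R_0(\alpha,z)-R_0(\alpha,z')\|_{\B(0,0)} \le \frac{|z-z'|}{\im z\,\im z'} .
\]
Taking $z=\lambda+i\eps$ and $z'=\lambda'+i\eps$ shows that $\lambda\mapsto R_0(\alpha,\lambda+i\eps)$ is (locally Lipschitz) continuous in $\B(0,0)$; by the embedding \eqref{imbed} one has $\|K\|_{\B(s,-s)}\le\|K\|_{\B(0,0)}$ for every $s\ge 0$, so the same continuity persists in $\B(s,-s)$.

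Next, for $s>1/2$ the limit in \eqref{it-limit}, guaranteed by \cite[Prop. 7.3]{IT}, is attained locally uniformly in $\lambda$ on $(0,\infty)$ with respect to $\|\cdot\|_{\B(s,-s)}$; that is, on every compact $K\subset(0,\infty)$ one has $\sup_{\lambda\in K}\|R_0(\alpha,\lambda+i\eps)-R_0(\alpha,\lambda)\|_{\B(s,-s)}\to 0$ as $\eps\to0+$. Hence, on each such $K$, the $\B(s,-s)$-valued function $R_0(\alpha,\cdot)$ is the uniform limit of the continuous functions $R_0(\alpha,\cdot+i\eps)$. Since a uniform limit of continuous functions is continuous, $\lambda\mapsto R_0(\alpha,\lambda)$ is continuous on every compact subinterval of $(0,\infty)$, and therefore on all of $(0,\infty)$, in the norm of $\B(s,-s)$, as claimed.

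The only genuinely nontrivial ingredient is the \emph{local uniformity} of the boundary-value convergence, but this is precisely the content of the cited result \cite[Prop. 7.3]{IT}; everything else reduces to the elementary resolvent estimate above. Alternatively, one could argue directly from the explicit kernel \eqref{r0-kernel}: in each partial wave the kernel $(h_m-\lambda)^{-1}(r,r')$ depends continuously on $\lambda$ through the Bessel functions, and the asymptotics \cite[Eqs. 9.1.3, 9.2.3]{as} provide, after insertion of the weights $w^{\pm s}$ with $s>1/2$, a $\lambda$-uniform Hilbert–Schmidt-type domination permitting passage to the limit term by term and summation over $m$. This route, however, essentially reproves the limiting absorption principle, so I would favour the shorter argument above.
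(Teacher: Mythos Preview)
Your argument is correct and shares the paper's overall strategy: approximate the boundary value by the interior resolvents $R_0(\alpha,\lambda+i\eps)$, control the interior difference via the resolvent identity, and invoke the locally uniform convergence from \cite[Prop.~7.3]{IT}. The execution, however, is more elementary than the paper's. To bound $R_0(\alpha,\lambda+i\eps)-R_0(\alpha,\lambda'+i\eps)$ in $\B(s,-s)$, the paper first establishes, through a commutator computation with $[H_\alpha,w^s]$, that $R_0(\alpha,z)\in\B(s,s)$ for $0<s\le1$ and $\im z\ne0$ (with a bound of order $|z|/|\im z|^2$), and then applies the resolvent identity as a product of $\B(s,s)$ operators. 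Your observation that the elementary bound $\|R_0(\alpha,z)\|_{\B(0,0)}\le(\im z)^{-1}$ together with the embedding \eqref{imbed} already does the job is a genuine shortcut: the commutator estimate is unnecessary for this lemma and is not invoked elsewhere in the paper. The paper's route yields the slightly stronger intermediate statement $R_0(\alpha,z)\in\B(s,s)$, but for the continuity claim itself your streamlined version is preferable.
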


\begin{proof}
First we show that for any $s$ with $s \leq 1$ and any $z\in\C$ with Im$\, z\neq 0$ we have $R_0(\alpha, z) \in \B(s,s)$. Given $u\in L^2(\R^2, w^s)$, we have
\begin{align} \label{comm}
\| R_0(\alpha,z) \, u\|_{0,s} &\ \leq \| \   R_0(\alpha,z) \, w^s\, u\|_0 + \| [ R_0(\alpha,z), w^s]  \, u\|_0  \nonumber \\
&\  = \  \|  R_0(\alpha,z) \, w^s\, u\|_0 + \|  R_0(\alpha,z)\, [H_\alpha, \, w^s]\, R_0(\alpha,z)  \, u\|_0.
\end{align}
In the polar coordinates $H_\alpha$ acts as
$$
H_\alpha  = -\partial_r^2 -\frac 1r\partial_r +\frac{1}{r^2} (i\partial_\theta +\alpha)^2 .
$$
Hence when calculated on functions from $C_0^\infty(\R^2\setminus\{0\})$ the commutator $[H_\alpha, w^s]$ reads as
$$
[ H_\alpha, w^s]  = -s\, w(r)^{ s -2}\, (s r^2+2) - 2s\, r\, w(r)^{ s -2}\, \partial_r,
$$
where $r=|x|$. Since $s\leq 1$, the first term on the right hand side is bounded. Moreover, from \eqref{q-form} it follows that for for every $f \in L^2(\R^2)$
$$
\| \partial_r \, R_0(\alpha, z)\, f\|^2_{L^2(\R^2)} \, \leq\, |\, \lef f, \, R_0(\alpha,z)\, f \rig _{L^2(\R^2)} | + |z|\, \|R_0(\alpha, z)\, f\|^2_{L^2(\R^2)}.
$$
This in combination with \eqref{comm} and the fact that
\begin{equation}
\| R_0(\alpha, z)\|_{\B(0,0)}\, \leq\, \frac{1}{|{\rm Im}\, z|}
\end{equation}
gives
\begin{equation} \label{ss}
\| R_0(\alpha,z) \, u\|_{0,s} \leq \, C\, \Big( \frac{1}{|{\rm Im}\, z|}+  \frac{|z|}{|{\rm Im}\, z|^2}\Big)\, \|u\|_{0,s}.
\end{equation}
Now, let $\lambda, \lambda' \in \R$ and let $\eps>0$. Then by the resolvent equation
\begin{equation} \label{resolv}
\| (R_0(\alpha, \lambda+i\eps) - R_0(\alpha, \lambda'+i\eps ))\, u\|_{0,s} = |\lambda-\lambda'|  \, \| R_0(\alpha, \lambda+i\eps)\, R_0(\alpha, \lambda'+i\eps )\, u\|_{0,s}\, .
\end{equation}
Hence in view of \eqref{ss}, for $\eps$ small enough
\begin{align*}
& \| (R_0(\alpha, \lambda) - R_0(\alpha, \lambda')) u\|_{0,-s}  \leq  \| R_0(\alpha, \lambda) - R_0(\alpha, \lambda+i\eps )\|_{\B(s, -s)}  \|u\|_{0,s}  \\
& \quad +\| R_0(\alpha, \lambda') - R_0(\alpha, \lambda'+i\eps )\|_{\B(s, -s)}  \|u\|_{0,s} + \mathcal{O}(\eps^{-4}) \, |\lambda-\lambda' | (1+|\lambda|)(1+|\lambda'|)\, \|u\|_{0,s}
\end{align*}
Since the first two terms on the right hand side converge to zero as $\eps \to 0$ locally uniformly in $\lambda$ respectively $\lambda'$, see \eqref{it-limit}, this proves the continuity of $R_0(\alpha, \lambda)$ in $\lambda\in (0,\infty)$ for $s \in(1/2,\, 1]$.  By \eqref{imbed} the claim holds for all $s>1/2$.
\end{proof}

\begin{lemma} \label{lem-scale}
For any $\eps>0$ there exists $C_\eps$ such that for all $s , s' \geq 1/2 +\eps$ it holds
\begin{equation} \label{eq-scale}
\|R_0(\alpha,\lambda)\|_{\B(s,-s')}  \, \leq \,  C_\eps\, \lambda^{-\frac 12 +\eps} \qquad \forall\ \lambda\geq 1.
\end{equation}
\end{lemma}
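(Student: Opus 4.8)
The plan is to exploit the exact scaling homogeneity of $H_\alpha$ and to reduce the estimate, via the elementary monotonicity \eqref{imbed}, to a single fixed energy. First I would observe that it suffices to treat the ``worst'' weights $s=s'=\sigma:=1/2+\eps$. Indeed, for any $s,s'\ge\sigma$ one has $\sigma\le s$ and $-\sigma\ge -s'$, so \eqref{imbed} gives $\|R_0(\alpha,\lambda)\|_{\B(s,-s')}\le\|R_0(\alpha,\lambda)\|_{\B(\sigma,-\sigma)}$. Hence a bound proved for the single pair $(\sigma,-\sigma)$ automatically holds, with the same constant, for all admissible $s,s'$; this monotonicity is exactly what makes the uniform constant $C_\eps$ possible.

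The core step is a dilation identity. Introduce the unitary dilation $(U_\mu u)(x)=\mu\,u(\mu x)$ on $L^2(\R^2)$. Since $A_{ab}(\mu x)=\mu^{-1}A_{ab}(x)$ by \eqref{ab-potential}, the magnetic momentum $P=i\nabla+A_{ab}$ satisfies $U_\mu P U_\mu^{-1}=\mu^{-1}P$, whence $Q_\alpha[U_\mu^{-1}f]=\mu^{-2}Q_\alpha[f]$ and therefore, at the level of Friedrichs extensions, $U_\mu H_\alpha U_\mu^{-1}=\mu^{-2}H_\alpha$. Conjugating the resolvent for $\mathrm{Im}\,z>0$ then yields $U_\mu R_0(\alpha,z)U_\mu^{-1}=\mu^2 R_0(\alpha,\mu^2 z)$; taking $z=1+i\eps$ with $\mu=\sqrt\lambda$ and letting $\eps\to0$ (the boundary values exist in $\B(\sigma,-\sigma)$ by Lemma \ref{lem-cont} and \eqref{it-limit}) gives the key relation $R_0(\alpha,\lambda)=\lambda^{-1}\,U_{\sqrt\lambda}\,R_0(\alpha,1)\,U_{\sqrt\lambda}^{-1}$, valid for all $\lambda\ge1$.

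Finally I would track the weights through the two dilation factors. The only elementary inputs are the inequalities $w(\mu z)\le\mu\,w(z)$ and $w(y/\mu)\ge\mu^{-1}w(y)$, valid for $\mu\ge1$, which follow at once from $w(x)^2=1+|x|^2$. A change of variables then gives $\|U_{\sqrt\lambda}^{-1}u\|_{0,\sigma}\le\lambda^{\sigma/2}\|u\|_{0,\sigma}$ on the input side and $\|w^{-\sigma}U_{\sqrt\lambda}g\|_0\le\lambda^{\sigma/2}\|g\|_{0,-\sigma}$ on the output side. Inserting these, together with the finite fixed-energy norm $M_\eps:=\|R_0(\alpha,1)\|_{\B(\sigma,-\sigma)}<\infty$ (finite by Lemma \ref{lem-cont}), into the scaling identity produces $\|R_0(\alpha,\lambda)\|_{\B(\sigma,-\sigma)}\le M_\eps\,\lambda^{-1}\lambda^{\sigma}=M_\eps\,\lambda^{-1/2+\eps}$, and the reduction of the first paragraph extends this to all $s,s'\ge\sigma$ with $C_\eps=M_\eps$. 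I expect the main obstacle to be not any single estimate but making the scaling rigorous at the real spectral parameter: one must justify $U_\mu H_\alpha U_\mu^{-1}=\mu^{-2}H_\alpha$ for the Friedrichs extension (rather than merely formally on $C_0^\infty(\R^2\setminus\{0\})$) and ensure the identity survives the limiting-absorption passage $\eps\to0$, which is precisely where the continuity statement of Lemma \ref{lem-cont} is needed.
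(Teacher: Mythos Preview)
Your argument is correct and is essentially the paper's own proof, repackaged in operator language: the paper reads the scaling directly from the explicit kernel identity $R_0(\alpha,\lambda,x,y)=R_0(\alpha,1,\sqrt{\lambda}\,x,\sqrt{\lambda}\,y)$ (equation \eqref{r0-kernel}), which is precisely the kernel version of your dilation relation $R_0(\alpha,\lambda)=\lambda^{-1}U_{\sqrt\lambda}R_0(\alpha,1)U_{\sqrt\lambda}^{-1}$, and then performs the same change-of-variables weight estimates you carry out. The only practical difference is that the kernel formulation makes the limiting-absorption passage unnecessary, since \eqref{r0-kernel} already gives the boundary value.
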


\begin{proof}
Let $s = 1/2 +\eps$ and let $u\in L^2_s(\R^2)$. We define $u_\lambda(x) = u(x/\sqrt{\lambda})$. Since $R_0(\alpha,\lambda,x,y) = R_0(\alpha, 1, \sqrt{\lambda}\, x, \sqrt{\lambda}\, y)$, see equation \eqref{r0-kernel}, a simple change of variables gives
\begin{align*}
\| R_0(\alpha, \lambda)\, u\|^2_{0,-s} & \leq \lambda^{-3+s}\, \|R_0(\alpha, 1)\|^2_{\B(s, -s)}\, \| u_\lambda\|^2_{0,s}.
\end{align*}
On the other hand,
$\| u_\lambda\|^2_{0,s} \, \leq \, \lambda^{1+s}\, \| u\|^2_{0,s}$.
Hence
\begin{align*}
\| R_0(\alpha, \lambda)\, u\|^2_{0,-s} & \leq \lambda^{-1+2\eps}\, \|R_0(\alpha, 1)\|^2_{\B(s, -s)}\, \| u\|^2_{0,s}.
\end{align*}
In view of \eqref{imbed} this completes the proof.
\end{proof}

\noindent In order to obtain suitable estimates on the derivatives of $R_0(\alpha, \lambda)$, we need the following technical result.

\begin{lemma} \label{lem-comm}
Let $T = (i\nabla +A_{ab})\cdot x %= i\nabla\cdot x
$. Assume that $s, s' > 5/2$. Then for any $\eps>0$ we have
\begin{equation} \label{l-infty}
[\, i T, \, R_0(\alpha, \lambda) ]  = \mathcal{O}(\lambda^{\eps}) \quad \text{in} \ \ \B(s,-s') \quad \text{as} \quad \lambda\to\infty.
\end{equation}
\end{lemma}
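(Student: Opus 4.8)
The plan is to exploit the fact that the Aharonov--Bohm potential is purely azimuthal, so that $T$ collapses to the generator of dilations. Writing $(i\partial_{x_j}+A_j)\,x_j = x_j\,(i\partial_{x_j}+A_j)+i$ and using that $A_{ab}(x)\cdot x=0$, one finds $T=i\,(x\cdot\nabla+2)$; in particular the magnetic part of $T$ drops out and $iT$ is, up to an additive constant, the classical dilation generator $-x\cdot\nabla$. Thus the statement is really an assertion about how dilations act on the resolvent of the scale-covariant operator $H_\alpha$.

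First I would compute the commutator with $H_\alpha$. Since each component $A_j$ of $A_{ab}$ is homogeneous of degree $-1$, Euler's identity gives $x\cdot\nabla A_j=-A_j$, while $[x\cdot\nabla,\partial_{x_j}]=-\partial_{x_j}$; hence $[x\cdot\nabla,\,i\partial_{x_j}+A_j]=-(i\partial_{x_j}+A_j)$ and therefore $[x\cdot\nabla,H_\alpha]=-2H_\alpha$, that is $[iT,H_\alpha]=2H_\alpha$. Inserting this into the resolvent commutator $[iT,R_0(\alpha,\lambda)]=-R_0(\alpha,\lambda)\,[iT,H_\alpha]\,R_0(\alpha,\lambda)$ and using $H_\alpha R_0(\alpha,\lambda)=1+\lambda R_0(\alpha,\lambda)$ yields the operator identity $[iT,R_0(\alpha,\lambda)]=-2\,R_0(\alpha,\lambda)-2\lambda\,R_0(\alpha,\lambda)^2$. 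These manipulations are justified on $C_0^\infty(\R^2\setminus\{0\})$ and extended to the weighted spaces through the estimates below; it is precisely the factor of $x$ carried by $T$ that costs one extra power of the weight and forces the restriction $s,s'>5/2$.

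It then remains to bound the two terms on the right-hand side in $\B(s,-s')$ as $\lambda\to\infty$. The first is immediate from Lemma~\ref{lem-scale}, which gives $\|R_0(\alpha,\lambda)\|_{\B(s,-s')}\leq C_\eps\,\lambda^{-1/2+\eps}$. For the second term I would use $R_0(\alpha,\lambda)^2=\tfrac{d}{d\lambda}R_0(\alpha,\lambda)$ together with the kernel homogeneity $R_0(\alpha,\lambda,x,y)=R_0(\alpha,1,\sqrt\lambda\,x,\sqrt\lambda\,y)$ read off from \eqref{r0-kernel}. Differentiating in $\lambda$ and performing the unitary change of variables used in the proof of Lemma~\ref{lem-scale} expresses $\lambda\,R_0(\alpha,\lambda)^2$ as the fixed operator $R_0(\alpha,1)^2$ conjugated by dilations; the enlarged weights $s,s'>5/2$ guarantee that $R_0(\alpha,1)^2$ is bounded from $L^2(\R^2,w^s)$ to $L^2(\R^2,w^{-s'})$ and, via the embedding \eqref{imbed}, let one absorb the scaling loss into the arbitrarily small power $\lambda^{\eps}$, giving the claimed $\mathcal{O}(\lambda^{\eps})$ bound.

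The main obstacle is exactly this control of $\lambda\,R_0(\alpha,\lambda)^2$ at high energy: two boundary-value (limiting absorption) resolvents cannot be composed directly, since the intermediate weight would have the wrong sign, so the squared resolvent must be treated as a single object through its $\lambda$-derivative and the scaling relation. Keeping track of the extra factors of $|x|$ and $|y|$ produced by the dilation is what dictates the weight threshold $s,s'>5/2$, and making the resulting high-energy power as small as $\lambda^{\eps}$ is the delicate point of the argument.
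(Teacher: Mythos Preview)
Your reduction of $T$ to the dilation generator and the commutator identity $[iT,R_0(\alpha,\lambda)]=-2R_0(\alpha,\lambda)-2\lambda R_0'(\alpha,\lambda)$ are correct; in fact the paper uses exactly this identity in the very next lemma (Lemma~\ref{lem-high-der}). The problem is that you are running the argument backwards: the paper first bounds the commutator directly and then uses the identity to control $R_0'$, whereas you try to bound $\lambda R_0'(\alpha,\lambda)$ independently and feed it into the identity. This creates two genuine gaps.

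First, the scaling argument does not produce the claimed power. From the kernel homogeneity one gets $\lambda R_0'(\alpha,\lambda)=\lambda^{-1}\, U_{\sqrt\lambda}\, R_0'(\alpha,1)\, U_{\sqrt\lambda}^{-1}$, and the dilation $U_{\sqrt\lambda}^{\pm 1}$ costs a factor $\lambda^{p/2}$ respectively $\lambda^{p'/2}$ in the weighted norms, so that
\[
\|\lambda R_0'(\alpha,\lambda)\|_{\B(p,-p')}\ \lesssim\ \lambda^{(p+p')/2-1}\,\|R_0'(\alpha,1)\|_{\B(p,-p')}.
\]
To reach $\mathcal{O}(\lambda^\eps)$ you would need $p+p'$ arbitrarily close to $2$, but the limiting-absorption threshold for the first derivative is $p,p'>3/2$; the best scaling can give is therefore $\mathcal{O}(\lambda^{1/2+\eps})$, not $\mathcal{O}(\lambda^\eps)$. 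The embedding \eqref{imbed} only lets you \emph{raise} $s,s'$, which makes the exponent worse, not better. Second, you assert without proof that $R_0(\alpha,1)^2=R_0'(\alpha,1)$ lies in $\B(s,-s')$; in the paper this is precisely what Lemmas~\ref{lem-comm} and~\ref{lem-high-der} together establish, so invoking it here is circular. The boundary-value square cannot be realised as a composition (the intermediate weight has the wrong sign, as you note), and no independent argument for its boundedness is offered.

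The paper avoids both issues by estimating the commutator directly: writing $iT=D_1 x_1+D_2 x_2$ with $D_j=-\partial_j+iA_j$, it bounds $w^{1-s'/2}D_j R_0(\alpha,\lambda)$ through the energy identity $\sum_j D_j^*D_j\, R_0(\alpha,\lambda)=H_\alpha R_0(\alpha,\lambda)=1+\lambda R_0(\alpha,\lambda)$ combined with Lemma~\ref{lem-scale}. This yields $\mathcal{O}(\lambda^\eps)$ without any prior information on $R_0'$.
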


\begin{proof}
Without loss of generality we may assume that $0< \eps < 1/2$.
We introduce the notation $D_j = -\partial_j + i A_j, \, j=1,2$ so that $i\, T= D_1 x_1+ D_2 x_2$. Moreover, let
$$
X_j(\lambda) = w^{1-\frac{s'}{2}}\, D_j \, R_0(\alpha, \lambda), \qquad j=1,2.
$$
We observe that  $[ D_j, x_j]= -1$. Thus by Lemma \ref{lem-scale} in order to prove \eqref{l-infty} it suffices to show that there exists a constant $C$ such that for all $\lambda$ large enough and any $p> 1/2$, and $u\in L^2_p(\R^2)$ we have
\begin{align} \label{eq-1}
\| w^{-\frac{s'}{2}} (x_1\, D_1 +x_2\, D_2) \, R_0(\alpha, \lambda) u\|_0^2 & \, \leq 2\, \| X_1(\lambda)\, u\|_0^2  + 2\, \|  X_2(\lambda)\, u\|_0^2  \leq C\,  \lambda^{2\eps}\, \|u\|_{0,p}^2.
\end{align}
Note that $w^{\frac{s'}{2} -1}\, [ D_j, w^{1-\frac{s'}{2}}] = - w^{\frac{s'}{2} -1} \, \partial_j w^{1-\frac{s'}{2}}$ is a bounded function for $j=1,2$. Hence by the Cauchy-Schwarz inequality and \eqref{eq-scale} we get
\begin{align}
  \| X_j(\lambda)\, u\|_0^2  & = \lef w^{1-\frac{s'}{2}}\, R_0(\alpha, \lambda) u, \, [ D_j^*, w^{1-\frac{s'}{2}}] D_j R_0(\alpha, \lambda) \, u\rig_{L^2(\R^2)}  \label{x^2}\\
&  \quad + \lef  [w^{1-\frac{s'}{2}},  D_j]  \, R_0(\alpha, \lambda) u, w^{1-\frac{s'}{2}}\, D_j R_0(\alpha, \lambda) u\rig_{L^2(\R^2)} \nonumber \\
& \quad + \lef w^{1-\frac{s'}{2}}\, R_0(\alpha, \lambda)\, u, \,  w^{1-\frac{s'}{2}}\, D_j^* D_j R_0(\alpha, \lambda) u\rig_{L^2(\R^2)} \nonumber\\
&  \leq 2\, C_\eps \, \lambda^{-\frac 12+\eps}\,  \| X_j(\lambda)\, u\|_0 \, \|u\|_{0,p} + \lef w^{1-\frac{s'}{2}}\, R_0(\alpha, \lambda)\, u, \,  w^{1-\frac{s'}{2}}\, D_j^* D_j R_0(\alpha, \lambda) \, u\rig_{L^2(\R^2)}.\nonumber
\end{align}
In the application of \eqref{eq-scale} we used the assumption $s'-2 > 1/2$.  Now from the identity
$$
\sum_{j=1}^2\, D_j^* D_j\, R_0(\alpha, \lambda) = H_\alpha\, R_0(\alpha, \lambda) = 1 +\lambda\, R_0(\alpha, \lambda)
$$
and equations \eqref{eq-scale} and \eqref{x^2} we obtain
\begin{align*}
\sum_{j=1}^2\,  \| X_j(\lambda)\, u\|_0^2  &\,  \leq \,  C_\eps \, \lambda^{-\frac 12+\eps}\,  \sum_{j=1}^2\,  \| X_j(\lambda)\, u\|_0^2  + 2\, C_\eps\,  \lambda^{-\frac 12+\eps}\,\|u\|^2_{0,p} + C_\eps^2\, \lambda^{2 \eps}\, \|u\|_{0,p}^2.
\end{align*}
Hence taking $\lambda$ large enough so that $C_\eps\, \lambda^{-\frac 12+\eps} \leq 1/2$ and at the same time $\lambda >1$, we deduce from the last equation that
$$
\sum_{j=1}^2\,  \| X_j(\lambda)\, u\|_0^2  \, \leq \, 2\, (1+ C^2_\eps)\, \|u\|_{0,p}^2.
$$
This implies \eqref{eq-1}. To prove that $\| R_0(\alpha, \lambda)\, T \|_{\B(s, -s')} = \mathcal{O}(\lambda^{\eps})$, let $u\in C_0^\infty(\R^2\setminus\{0\})$ and write $v_j = x_j u$. From \eqref{eq-1} applied to $v_j$ with $p=s-2$ we deduce that
\begin{align} \label{eq-2}
\| w^{-\frac{s'}{2}} ( D_1\, R_0(\alpha, \lambda)\, v_1 + D_2\, R_0(\alpha, \lambda)\, v_2) \|_0^2 & \, \leq 2\, \sum_{j=1}^2 \| w^{-\frac{s'}{2}}\, D_j R_0(\alpha, \lambda) v_j\|_0^2  =  \mathcal{O}(\lambda^{2\eps})\, \|u\|_{0,s}^2
\end{align}
as $\lambda\to \infty$.
Since the operators $D_j$ commute with $R_0(\alpha,\lambda)$ on $C_0^\infty(\R^2\setminus\{0\})$, inequality \eqref{eq-2} implies that
$$
\| R_0(\alpha, \lambda)\, T \, u \|_{-s'}^2 = \mathcal{O}(\lambda^{2\eps})\, \|u\|_{0,s}^2 \quad \text{as} \ \  \lambda\to \infty
$$
for all $u\in C_0^\infty(\R^2\setminus\{0\})$. By density we conclude that $\| R_0(\alpha, \lambda)\, T \|_{\B(s, -s')} = \mathcal{O}(\lambda^{\eps})$.
\end{proof}

\begin{lemma} \label{lem-high-der}
Let $s,s' > 5/2$.
Let $R_0^{(k)}(\alpha,\lambda)$ denote the $k-$th derivative of $R_0(\alpha,\lambda)$ with respect to $\lambda$ in $\B(s, -s')$.  Then  for any $\eps>0$ there exists $C_{\eps,k}$ such that
\begin{equation} \label{der-high}
\|R_0^{(k)}(\alpha,\lambda)\|_{\B(s, -s')}  \, \leq \,  C_{\eps,k}\ \lambda^{-\frac {k+1}{2} +\eps} \qquad \forall\ \lambda\geq 1, \quad k=1,2.
\end{equation}
\end{lemma}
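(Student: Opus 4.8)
The plan is to express every $\lambda$-derivative of $R_0(\alpha,\lambda)$ through commutators with $T$, for which the high-energy bounds of Lemma \ref{lem-scale} and Lemma \ref{lem-comm} are already at hand, trading one derivative for one extra factor of $\lambda^{-1/2+\eps}$. The point of departure is a virial identity. Working on the core $C_0^\infty(\R^2\setminus\{0\})$, where the Aharonov-Bohm field vanishes, the magnetic derivatives satisfy $[D_j,D_k]=0$ and $[D_j,x_k]=-\delta_{jk}$; since $H_\alpha=-\sum_j D_j^2$ and $iT=\sum_j D_j x_j$, a direct computation gives $[H_\alpha,iT]=-2H_\alpha$. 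Using $H_\alpha R_0(\alpha,\lambda)=1+\lambda R_0(\alpha,\lambda)$ in $[R_0(\alpha,\lambda),iT]=-R_0(\alpha,\lambda)[H_\alpha,iT]R_0(\alpha,\lambda)$ yields $[R_0(\alpha,\lambda),iT]=2R_0(\alpha,\lambda)+2\lambda R_0'(\alpha,\lambda)$, that is,
\begin{equation} \label{key-id}
R_0'(\alpha,\lambda) = \frac{1}{2\lambda}\, [R_0(\alpha,\lambda), iT] - \frac{1}{\lambda}\, R_0(\alpha,\lambda),
\end{equation}
which I would read as an identity in $\B(s,-s')$. The only genuinely delicate point here is the rigorous justification of \eqref{key-id}: since $T$ is unbounded and $R_0(\alpha,\lambda)$ does not map into its domain, I would first establish it on $C_0^\infty(\R^2\setminus\{0\})$ and then pass to the limit by density, controlling each term in the weighted norm by Lemma \ref{lem-scale} and Lemma \ref{lem-comm}.

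Granting \eqref{key-id}, the case $k=1$ is immediate. By Lemma \ref{lem-comm} one has $\|[R_0(\alpha,\lambda),iT]\|_{\B(s,-s')}=\mathcal{O}(\lambda^\eps)$, and by Lemma \ref{lem-scale} (in the form \eqref{eq-scale}) $\|R_0(\alpha,\lambda)\|_{\B(s,-s')}=\mathcal{O}(\lambda^{-1/2+\eps})$; inserting both into \eqref{key-id} gives $\|R_0'(\alpha,\lambda)\|_{\B(s,-s')}=\mathcal{O}(\lambda^{-1+\eps})$, which is \eqref{der-high} for $k=1$.

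For $k=2$ I would differentiate \eqref{key-id} once more, obtaining
\begin{equation} \label{k2-split}
R_0''(\alpha,\lambda) = \frac{1}{2\lambda}\, [R_0'(\alpha,\lambda),iT] - \frac{1}{\lambda}\, R_0'(\alpha,\lambda) - \frac{1}{2\lambda^2}\, [R_0(\alpha,\lambda),iT] + \frac{1}{\lambda^2}\, R_0(\alpha,\lambda).
\end{equation}
By the $k=1$ bound together with Lemmas \ref{lem-scale} and \ref{lem-comm}, the last three terms of \eqref{k2-split} are all $\mathcal{O}(\lambda^{-2+\eps})$, hence negligible against $\lambda^{-3/2+\eps}$. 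The decisive contribution is $\frac{1}{2\lambda}[R_0'(\alpha,\lambda),iT]$, so everything reduces to the bound $\|[R_0'(\alpha,\lambda),iT]\|_{\B(s,-s')}=\mathcal{O}(\lambda^{-1/2+\eps})$.

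The main obstacle is precisely this commutator estimate for $R_0'(\alpha,\lambda)$, and I would obtain it by rerunning the proof of Lemma \ref{lem-comm} verbatim with $R_0'(\alpha,\lambda)$ in place of $R_0(\alpha,\lambda)$. That proof used only three ingredients: that the $D_j$ commute with the resolvent on the core; the energy identity $\sum_j D_j^* D_j R_0(\alpha,\lambda)=1+\lambda R_0(\alpha,\lambda)$; and the scaling bound \eqref{eq-scale}. All three have exact analogues here, since $D_j$ commutes with $R_0'(\alpha,\lambda)=R_0(\alpha,\lambda)^2$, the energy identity becomes $\sum_j D_j^* D_j R_0'(\alpha,\lambda)=R_0(\alpha,\lambda)+\lambda R_0'(\alpha,\lambda)$, and the role of \eqref{eq-scale} is now played by the $k=1$ estimate $\|R_0'(\alpha,\lambda)\|_{\B(s,-s')}=\mathcal{O}(\lambda^{-1+\eps})$ just established. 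Crucially, $T$ still enters only linearly, through a single factor of $x$, so the same weight budget $s,s'>5/2$ suffices and exactly one factor $\lambda^{1/2}$ is produced by the commutator; starting from $\lambda^{-1+\eps}$ this gives $\|[R_0'(\alpha,\lambda),iT]\|_{\B(s,-s')}=\mathcal{O}(\lambda^{-1/2+\eps})$. Substituting into \eqref{k2-split} yields $\|R_0''(\alpha,\lambda)\|_{\B(s,-s')}=\mathcal{O}(\lambda^{-3/2+\eps})$, completing \eqref{der-high} for $k=2$. The only recurring care is the domain/density justification already mentioned for \eqref{key-id}, since each commutator identity must first be read on $C_0^\infty(\R^2\setminus\{0\})$.
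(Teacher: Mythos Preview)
Your treatment of $k=1$ is exactly the paper's: both derive the virial relation $[iT,H_\alpha]=2H_\alpha$ on the core, convert it into \eqref{key-id}, and combine Lemma~\ref{lem-comm} with \eqref{eq-scale}.

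For $k=2$ the two proofs diverge. Rather than differentiating \eqref{key-id} and rerunning Lemma~\ref{lem-comm} for $R_0'$, the paper passes to a \emph{position} double commutator,
\[
\big[x,[x,R_0(\alpha,z)]\big]=|x|^2 R_0(\alpha,z)-2\sum_j x_j R_0(\alpha,z)x_j+R_0(\alpha,z)|x|^2,
\]
together with an identity of the form $zR_0''=c\,R_0'-\tfrac14\,[x,[x,R_0]]$. The point is that multiplication by $x$ is just the weight shift $x\in\B(s,s-1)$, so each of the three pieces above is bounded using only the scaling estimate \eqref{eq-scale} for $R_0$ itself; two units of weight are lost to $|x|^2$ and one needs $>1/2$ for the limiting absorption principle, which is precisely $s,s'>5/2$. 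No recursion on the $k=1$ bound, and no second pass through the machinery of Lemma~\ref{lem-comm}, is required.

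Your $k=2$ route has a real weight-bookkeeping gap. The assertion that ``the same weight budget $s,s'>5/2$ suffices'' does not survive the rerun. In the proof of Lemma~\ref{lem-comm} the scaling bound \eqref{eq-scale} is invoked with output exponent $s'-2$ (this is the explicit use of $s'-2>1/2$); in your rerun the role of \eqref{eq-scale} is taken over by the $k=1$ estimate $\|R_0'\|_{\B(p,-q)}=\mathcal{O}(\lambda^{-1+\eps})$, which is only available for $p,q>5/2$. The same step therefore forces $s'-2>5/2$, and the term $\lambda\|w^{1-s'/2}R_0'u\|_0^2$ arising from your energy identity $\sum_j D_j^*D_j R_0'=R_0+\lambda R_0'$ requires $s'/2-1>5/2$, i.e.\ $s'>7$; the $R_0'T$ half of the commutator imposes the symmetric constraint on $s$. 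So your argument proves the estimate only under a hypothesis like $s,s'>7$. That is in fact harmless for the downstream application---Corollary~\ref{cor-high-0} passes to $\B(\rho,\rho^{-1})$ and needs only \emph{some} fixed pair of weights---but it does not establish Lemma~\ref{lem-high-der} as stated. The paper's $[x,[x,R_0]]$ device is exactly what keeps the threshold at $5/2$, by reducing everything to the $k=0$ bound.
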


\begin{proof}
Let $T$ be the operator introduced in Lemma \ref{lem-comm}. Then the commutator $[\, i T, H_\alpha]$ calculated on $C_0^\infty(\R^2\setminus\{0\})$ gives
\begin{equation*}
[ \, i T, \, H_\alpha] = 2\, H_\alpha.
\end{equation*}
This together with the first resolvent equation $R_0(\alpha, z)- R_0(\alpha, \zeta) = (z-\zeta)R_0(\alpha, z)R_0(\alpha, \zeta),\, z, \zeta \in\C_+$, implies the identity
\begin{equation} \label{comm-1}
[\, i T, \, R_0(\alpha, z) ] =   -R_0(\alpha, z) \, [ \, i T, \, H_\alpha]\, R_0(\alpha, z) = -2 R_0(\alpha, z) - 2 z\, R_0'(\alpha, z).
\end{equation}
On the other hand, from Lemma \ref{lem-comm} it follows that $[\, i T, \, R_0(\alpha, \lambda)] \in \B(s, -s')$ for $\lambda \geq 1$.
Equation \eqref{comm-1}, considered in $\B(s, -s')$, thus can be thus extended to $z= \lambda \in \R, \, \lambda\geq 1$. This shows that $R_0'(\alpha, \lambda)$ is continuous in $\lambda$ on $(1,\infty)$ with respect to the norm of $\B(s, -s')$ and in view of Lemma \ref{lem-comm}
\begin{equation}  \label{r0'}
R_0'(\alpha,\lambda) \, = \mathcal{O}(|\lambda|^{-1+\eps}) \quad \text{in} \ \ \B(s,-s'), \quad \lambda\to\infty.
\end{equation}
Hence equation \eqref{der-high} is proven for $k=1$. To prove it for $k=2$ we consider the double commutator
$$
\big[ x, [x, R_0(\alpha,z)] \big] = |x|^2\, R_0(\alpha,z) -2 \, \sum_{j=1}^2\, x_j R_0(\alpha,z)\, x_j + R_0(\alpha,z)\, |x|^2
$$
and use the identity
\begin{equation} \label{comm-2}
z\, R''_0(\alpha, z) = -3 R_0'(\alpha,z) -\frac 14\, \big[ x, [x, R_0(\alpha,z)] \big],
\end{equation}
which follows by a direct calculation in the same way as equation \eqref{comm-1}. By using the fact that
\begin{align}
x  & \in \B(s, s-1) \qquad \forall \  s \in\R,  \label{x}
\end{align}
we extend equation \eqref{comm-2}, considered in $\B(s, -s')$, to $z = \lambda \in \R, \, \lambda \geq 1$. From \eqref{der-high} for $k=1$, \eqref{r0'}, and \eqref{eq-scale} we thus arrive at
 $$
R_0''(\alpha,\lambda) \, = \mathcal{O}(|\lambda|^{-\frac 32+\eps}) \quad \text{in} \ \ \B(s, -s'), \quad \lambda\to\infty.
$$
The claim now follows.
\end{proof}

\begin{corollary}  \label{cor-high-0}
For any $\eps>0$ and any $k=0,1,2$ there exist constants $C_{\eps,k}$ such that
\begin{equation}
\|R_0^{(k)}(\alpha,\lambda)\|_{\B(\rho, \rho^{-1})}  \, \leq \,  C_{\eps,k}\  \lambda^{-\frac {k+1}{2} +\eps} \qquad \forall\ \lambda\geq 1.
\end{equation}
\end{corollary}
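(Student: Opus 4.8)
The plan is to deduce this high-energy corollary from the bounds already established in the polynomially weighted spaces $\B(s,-s')$, transferring them to the exponentially weighted space $\B(\rho,\rho^{-1})$ by a pure comparison of weights. The crucial observation is that $\rho(x)=e^{|x|^4}$ dominates every power of $w(x)=(1+|x|^2)^{1/2}$: for each fixed $s,s'\geq 0$ there are constants $c_s,c_{s'}$ with
\[
w(x)^{s}\,\leq\, c_s\,\rho(x)^{1/2}, \qquad \rho(x)^{-1/2}\,\leq\, c_{s'}\, w(x)^{-s'} \qquad \forall\, x\in\R^2,
\]
because $e^{|x|^4/2}$ grows faster than any polynomial and $e^{-|x|^4/2}$ decays faster than any power. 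These two pointwise inequalities say precisely that the natural inclusions
\[
\iota_1\colon L^2(\R^2,\rho)\hookrightarrow L^2(\R^2, w^{s}), \qquad \iota_2\colon L^2(\R^2, w^{-s'})\hookrightarrow L^2(\R^2, \rho^{-1})
\]
are bounded, with $\|u\|_{0,s}\leq c_s\|u\|_{\rho}$ and $\|f\|_{\rho^{-1}}\leq c_{s'}\|f\|_{0,-s'}$.

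Next I would fix one convenient pair of polynomial exponents. Since any $u\in L^2(\R^2,\rho)$ lies in $L^2(\R^2,w^{s})$ and $R_0^{(k)}(\alpha,\lambda)\,u$ then lies in $L^2(\R^2,w^{-s'})\subset L^2(\R^2,\rho^{-1})$, the operator $R_0^{(k)}(\alpha,\lambda)$ acting from $L^2(\R^2,\rho)$ to $L^2(\R^2,\rho^{-1})$ is nothing but the composition $\iota_2\circ R_0^{(k)}(\alpha,\lambda)\circ\iota_1$. Composing the three bounds above yields, for every $\lambda\geq 1$,
\[
\|R_0^{(k)}(\alpha,\lambda)\|_{\B(\rho,\rho^{-1})}\,\leq\, c_s\, c_{s'}\,\|R_0^{(k)}(\alpha,\lambda)\|_{\B(s,-s')}.
\]

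Finally I would invoke the estimates already proved. Since for $\lambda\geq 1$ the target bound $\lambda^{-(k+1)/2+\eps}$ is nondecreasing in $\eps$, it suffices to treat arbitrarily small $\eps>0$; fixing then $s=s'=3$ gives $s,s'>5/2$ and, for $\eps<5/2$, also $s,s'\geq 1/2+\eps$. With this choice Lemma \ref{lem-scale} controls the right-hand side for $k=0$, producing the exponent $-\tfrac12+\eps$, while Lemma \ref{lem-high-der} does so for $k=1,2$, producing $-\tfrac{k+1}{2}+\eps$; in all three cases this is exactly $\lambda^{-(k+1)/2+\eps}$, and the constant $C_{\eps,k}$ simply absorbs the fixed factor $c_s c_{s'}$. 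I do not anticipate any genuine obstacle: the analytic content is entirely contained in the two preceding lemmas, and the only point requiring a moment's care is checking that a single choice of $s=s'$ meets the hypotheses of both lemmas at once, the finiteness of the embedding constants being guaranteed by the super-polynomial growth of $\rho$.
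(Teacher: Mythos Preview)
Your proposal is correct and follows essentially the same approach as the paper: the paper's proof is simply the observation that for any $s>0$ one has $\|R_0^{(k)}(\alpha,\lambda)\|_{\B(\rho,\rho^{-1})}\leq C_s\,\|R_0^{(k)}(\alpha,\lambda)\|_{\B(s,-s)}$, after which the claim follows from Lemmata \ref{lem-scale} and \ref{lem-high-der}. You have spelled out this weight comparison and the choice of $s=s'$ in somewhat more detail, but the content is the same.
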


\begin{proof}
Since for any $s>0$ there exists $C_s$ such that
$$
\|R_0(\alpha,\lambda)\|_{\B(\rho, \rho^{-1})}  \, \leq \,  C_s \, \|R_0(\alpha, \lambda)\|_{\B(s, -s)},
$$
the claim follows from Lemmata \ref{lem-scale}, \ref{lem-high-der}.
\end{proof}

\begin{proposition}  \label{prop-high}
Suppose that Assumption \ref{ass-V} is satisfied. Then, for any $k=0,1,2$  we have $R^{(k)}(\alpha,\lambda)\in \B(\rho, \rho^{-1})$ and
\begin{equation} \label{res-infty}
\|R^{(k)}(\alpha,\lambda)\|_{\B(\rho, \rho^{-1})}  \, = \,   \mathcal{O}(\lambda^{-\frac {k+1}{2} +0}) \qquad  \lambda\to \infty.
\end{equation}
\end{proposition}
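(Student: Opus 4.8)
Under Assumption~\ref{ass-V}, for $k=0,1,2$ we have $R^{(k)}(\alpha,\lambda)\in \B(\rho,\rho^{-1})$ with $\|R^{(k)}(\alpha,\lambda)\|_{\B(\rho,\rho^{-1})} = \mathcal{O}(\lambda^{-(k+1)/2+0})$ as $\lambda\to\infty$.

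The plan is to transfer the free high-energy bounds of Corollary~\ref{cor-high-0} to the perturbed resolvent $R(\alpha,\lambda)$ via the resolvent identity. First I would fix the representation $R = (1+R_0 V)^{-1}R_0$ from \eqref{2-res}, which is valid once $1+R_0(\alpha,\lambda)V$ is invertible in $\B(\rho^{-1},\rho^{-1})$. The crucial input that makes this work at high energy, rather than only near $\lambda=0$, is that $\|R_0(\alpha,\lambda)\|_{\B(\rho,\rho^{-1})}\to 0$ as $\lambda\to\infty$ by Corollary~\ref{cor-high-0} with $k=0$; since $V\in\B(\rho^{-1},\rho)$ is bounded with compact support, this gives $\|R_0(\alpha,\lambda)V\|_{\B(\rho^{-1},\rho^{-1})}\to 0$, so for $\lambda$ large enough the Neumann series converges and $1+R_0(\alpha,\lambda)V$ is invertible with $\|(1+R_0(\alpha,\lambda)V)^{-1}\|_{\B(\rho^{-1},\rho^{-1})}$ bounded uniformly in $\lambda$. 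This already yields the case $k=0$ of \eqref{res-infty}.

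For the derivatives I would differentiate the resolvent identity and use the explicit formulas \eqref{R'} and \eqref{R''} for $R'$ and $R''$ that were recorded in the proof of Corollary~\ref{cor-der-0}. Written in the factored form
\begin{align*}
R'(\alpha,\lambda) &= (1-R(\alpha,\lambda)V)\, R_0'(\alpha,\lambda)\, (1-VR(\alpha,\lambda)),\\
R''(\alpha,\lambda) &= (1-R(\alpha,\lambda)V)\, R_0''(\alpha,\lambda)\, (1-VR(\alpha,\lambda)) -2R'(\alpha,\lambda)\,V\,R_0'(\alpha,\lambda)\,(1-VR(\alpha,\lambda)),
\end{align*}
each derivative of $R$ is expressed through the corresponding derivatives of $R_0$ sandwiched between factors of the form $1-R(\alpha,\lambda)V$ and $1-VR(\alpha,\lambda)$. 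The point is that these sandwiching factors are uniformly bounded in the relevant operator norms for $\lambda$ large: from the $k=0$ bound and the boundedness of $V$, the operator $R(\alpha,\lambda)V$ is $\mathcal{O}(\lambda^{-1/2+0})$ in $\B(\rho^{-1},\rho^{-1})$, hence $1-R(\alpha,\lambda)V=\mathcal{O}(1)$; dually $1-VR(\alpha,\lambda)=\mathcal{O}(1)$ in $\B(\rho,\rho)$ by Remark~\ref{dual}. Substituting the free bounds $\|R_0^{(k)}(\alpha,\lambda)\|_{\B(\rho,\rho^{-1})}=\mathcal{O}(\lambda^{-(k+1)/2+0})$ from Corollary~\ref{cor-high-0} then gives the desired decay rate for $R'$ ($k=1$) directly, and for $R''$ ($k=2$) after noting that the extra term $-2R'VR_0'(1-VR)$ contributes $\mathcal{O}(\lambda^{-1+0})\cdot\mathcal{O}(\lambda^{-1+0})=\mathcal{O}(\lambda^{-1+0})$, which is dominated by the leading $\mathcal{O}(\lambda^{-3/2+0})$ coming from $R_0''$. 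Throughout, the ``$+0$'' in the exponents is the harmless $\eps$-loss from Corollary~\ref{cor-high-0}, absorbed by taking $\eps$ small.

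The step requiring the most care is the bookkeeping of which operator norm each factor is estimated in, since the composition mixes $\B(\rho,\rho^{-1})$, $\B(\rho^{-1},\rho^{-1})$, and $\B(\rho,\rho)$; one must check that the mapping properties chain correctly so that the products land in $\B(\rho,\rho^{-1})$. Concretely, $V$ must be read as a map $\B(\rho^{-1},\rho)$ on the left of $R_0^{(k)}$ and the factor $(1-R(\alpha,\lambda)V)$ as acting in $\B(\rho^{-1},\rho^{-1})$, while $(1-VR(\alpha,\lambda))$ acts in $\B(\rho,\rho)$; the compact support and boundedness of $V$ guarantee it intertwines the weights $\rho$ and $\rho^{-1}$ as needed. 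I do not anticipate a genuine analytic obstacle here, since all the hard estimates — the free high-energy decay and the invertibility of $1+R_0V$ — are already in hand from the preceding lemmas; the proposition is essentially a perturbative consolidation of them.
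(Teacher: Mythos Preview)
Your high-energy argument via the Neumann series and the identities \eqref{R'}--\eqref{R''} is correct and matches the paper's treatment of the $\lambda\to\infty$ regime. However, you have skipped a substantial part of the proposition: the membership $R^{(k)}(\alpha,\lambda)\in\B(\rho,\rho^{-1})$ is meant to hold for \emph{all} $\lambda>0$, not just for large $\lambda$. This is precisely what the application in the proof of Theorem~\ref{thm-2} requires, where one needs $E''(\alpha,\lambda)\in L^1((\delta,\infty);\B(\rho,\rho^{-1}))$ for every $\delta>0$. Your Neumann-series inversion of $1+R_0(\alpha,\lambda)V$ is only available once $\|R_0(\alpha,\lambda)V\|<1$, and the low-energy expansion of Lemma~\ref{lem-exp1} only handles a neighborhood of $\lambda=0$; nothing in your argument covers the intermediate range. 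The paper fills this gap by first proving that $H=H_\alpha+V$ has no positive eigenvalues (using that $V$ is compactly supported, a partial-wave argument to show any eigenfunction vanishes outside a ball, and then unique continuation), and only then invoking the Fredholm alternative: $VR_0(\alpha,\lambda)$ is compact in $\B(\rho,\rho)$, and absence of positive eigenvalues forces $-1\notin\sigma(VR_0(\alpha,\lambda))$ for each $\lambda>0$, so $1+VR_0(\alpha,\lambda)$ is invertible on the whole half-line and its inverse is continuous there. That step is where the compact-support hypothesis on $V$ is genuinely used, and it cannot be bypassed by the purely perturbative reasoning you outline.

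A minor slip: in your estimate of the second term in $R''$, the product $\mathcal{O}(\lambda^{-1+0})\cdot\mathcal{O}(\lambda^{-1+0})$ is $\mathcal{O}(\lambda^{-2+0})$, not $\mathcal{O}(\lambda^{-1+0})$. This actually helps you, since $\lambda^{-2}$ is dominated by $\lambda^{-3/2}$; as written, $\lambda^{-1}$ would \emph{not} be.
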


\begin{proof}
Let us first show that the operator $H=H_\alpha +V$ has no positive eigenvalues. To this end assume that there exists $\lambda >0$ such that $H\, u= \lambda\, u$. Since $V$ is compactly supported, there exists $R>0$ such that $V=0$ outside the ball $B(0,R)$. We can thus use decomposition \eqref{sum-gen} to obtain
\begin{equation}
h_m \, f_m (r) = \lambda\, f_m(r) \qquad \forall \ m\in\Z, \quad \forall\ r > R,
\end{equation}
where
$$
f_m(r) = \frac{1}{2\pi} \, \lef \, e^{im \theta},\, u(r,\theta)\rig_{L^2(0,2\pi)}
$$
and $h_m$ is the operator associated with quadratic form $Q_m$ given by \eqref{qm}.
A direct calculation now shows that $f_m$ is a linear combination of the Bessel functions
$J_{|m +\alpha|}(r\sqrt{\lambda})$ and  $Y_{|m+\alpha|}(r\sqrt{\lambda})$. Since any non-trivial linear combination
of these functions does not belong to $L^2((R,\infty), r dr)$, see \cite[Eqs.9.2.1-2]{as}, we conclude that $f_m=0$ on $(R,\infty)$ and hence $u=0$ outside $B(0,R)$. By the unique continuation principle we then have $u=0$.
Hence $\lambda$ is not an eigenvalue.

Next, since $R_0(\alpha, \lambda)$ is Hilbert-Schmidt, and therefore compact, from $L^2(\R^2, \rho)$ to $L^2(\R^2, \rho^{-1})$, and $V\in \B(\rho^{-1}, \rho)$, we observe that $V R_0(\alpha, \lambda)$ is compact from $L^2(\R^2, \rho)$ to $L^2(\R^2, \rho)$. Moreover the fact that $H$ has no positive eigenvalues implies that $-1$ is not an eigenvalue of $V R_0(\alpha, \lambda)$ in $\B(\rho, \rho)$ for any $\lambda \in (0,\infty)$. Hence the operator $1+ V R_0(\alpha, \lambda)$ is invertible in $\B(\rho, \rho)$ and its inverse is continuous with respect to $\lambda$ in the norm of $\B(\rho, \rho)$. The latter follows from the continuity of $VR_0(\alpha, \lambda)$ in $\B(\rho, \rho)$. Now equation \eqref{2-res} implies that $R(\alpha,\lambda) \in \B(\rho, \rho^{-1})$ and that $R(\alpha,\lambda)$ is continuous in $\lambda\in (0,\infty)$.

Finally, since $\|V\, R_0(\alpha, \lambda)\|_{\B(\rho, \rho)} \to 0$ as $\lambda \to \infty$, by Corollary \ref{cor-high-0}, we conclude that $(1+ V R_0(\alpha, \lambda))^{-1}$ is uniformly bounded for $\lambda\to\infty$ in the norm of $\B(\rho, \rho)$.
In view of \eqref{2-res} and Corollary \ref{cor-high-0} we thus get
$$
\|R(\alpha,\lambda)\|_{\B(\rho, \rho^{-1})}  \, = \,   \mathcal{O}(\lambda^{-\frac {1}{2} +0}) \qquad  \lambda\to \infty.
$$
This proves \eqref{res-infty} for $k=0$. The proof for $k=1$ and $k=2$ now follows from \eqref{res-infty} with $k=0$, identities \eqref{R'}, \eqref{R''} and Corollary \ref{cor-high-0}.
\end{proof}

\begin{remark} \label{lap} The absence of positive eigenvalues, as a consequence of the limiting absorption principle, was studied in great generality in the seminal work \cite{ag} for non-magnetic Schr\"odinger operators.
\end{remark}
%%%%%%%%%%%%%%%%%%%%%%%%%%%%%%%%%%%%%%%%%%%%%%%%%%%%%%%%%

\section{\bf Time decay}
\label{sec-proofs}

\subsection{Proof of Theorem \ref{thm-2}} For the sake of brevity we will prove the statements of Theorem \ref{thm-2} only for $|\alpha| < 1/2$.
The proof in the case $|\alpha| = 1/2$ is completely analogous.  By the spectral theorem  and the Stone formula we have
\begin{equation} \label{sp-thm}
e^{-i t H}\, P_c = \int_0^\infty\, e^{-i t \lambda} \, E(\alpha, \lambda)\, d\lambda,
\end{equation}
where
\begin{equation} \label{stone}
E(\alpha, \lambda) = \frac{1}{2\pi i} \lim_{\eps\to 0+} (R(\alpha, \lambda+i\eps) -R(\alpha, \lambda-i\eps))=   \frac{1}{\pi}\, {\rm Im}\, R(\alpha, \lambda).
\end{equation}
Here the imaginary part is meant in the sense of quadratic forms.
Let $\chi \in C^\infty(0,\infty)$ be such that $\chi(x)=0$ for all $x$ large enough and $\chi(x)=1$ in a neighborhood of $0$. Since
$E''(\alpha, \lambda) \in L^1((\delta,\infty); \B(\rho, \rho^{-1}))$ for any $\delta >0$ in view of Proposition \ref{prop-high}, we can apply \cite[Lemma 10.1]{JK}, see Lemma \ref{lem-jk1} in Appendix \ref{app}, to obtain
\begin{equation}
\int_0^\infty\, e^{-i t \lambda} \, (1-\chi(\lambda))\, E(\alpha, \lambda)\, d\lambda = o(t^{-2}) \quad \text{as} \quad t\to \infty
\end{equation}
in $\B(\rho, \rho^{-1})$.
On the other hand, for $\lambda\to 0$ we have by \eqref{stone} and Lemma \ref{lem-exp2}
$$
E(\alpha, \lambda) = E_1\, \lambda^{|\alpha|} + E_2(\alpha,\lambda), \qquad  E_2(\alpha,\lambda) = o(\lambda^{|\alpha|}),
$$
where
$$
E_1= \frac{1}{\Gamma(1+|\alpha|)}\,  (1+G_0 V)^{-1}\, \mathcal{G}_1 \,(1+ V G_0 )^{-1}.
$$
Moreover, by Corollary \ref{cor-der-0} we know that $E''_2(\alpha,\lambda) = o(\lambda^{|\alpha|-2})$ as $\lambda\to 0$. Hence from \cite[Lemma 10.2]{JK}, see Lemma \ref{lem-jk1} in Appendix \ref{app}, applied to $\chi(\lambda)\, E_2(\alpha, \lambda)$ it follows that
$$
\int_0^\infty\, e^{-i t \lambda} \, \chi(\lambda)\, E_2(\alpha, \lambda)\, d\lambda = o (t^{-1-|\alpha|})  \qquad t\to \infty.
$$
Finally, since $\chi(\lambda)=1$ in a vicinity of zero, the Erdelyi's lemma about asymptotic expansion of Fourier integrals, see \cite{erd2} or \cite[p. 639]{z}, gives
$$
\int_0^\infty \, e^{-i t \lambda} \, \lambda^{|\alpha|}\, \chi(\lambda)\, d\lambda = (it)^{-1-|\alpha|}\, \Gamma(1+|\alpha|) + o (t^{-1-|\alpha|}) \qquad t\to \infty.
$$
Summing up, we have
$$
e^{-i t H}\, P_c  = (it)^{-1-|\alpha|}\, \Gamma(1+|\alpha|)\, E_1 + o (t^{-1-|\alpha|})
$$
in $\B(\rho, \rho^{-1})$. This completes the proof of \eqref{limit-3}. Equation \eqref{limit-4} follows in the same way with $E_1$ replaced by
$$
 \frac{1}{\Gamma(1+|\alpha|)}\,  (1+G_0 V)^{-1}\, (\mathcal{G}_1 + \mathcal{G}_2) \,(1+ V G_0 )^{-1}.
$$
Estimate \eqref{weight-estim} is now immediate since $\| \, e^{-i\, t H}\, P_c\, u \|_{\rho^{-1} }\, \leq\,  \|u\|_{\rho}$ for all $t>0$.

\begin{remark}
In principle it would be possible to extend the above analysis and obtain higher order terms in the asymptotic expansion for $e^{-i t H}\, P_c$ as $t\to\infty$.
\end{remark}

%%%%%%%%%%%%%%%%%%%%%%%%%%%%%%%%%%%%%%%%%%%%%%%%%%%%%%%%%

\subsection{Proof of Theorem \ref{thm-1}}   We first establish an explicit formula for $e^{-i\, tH_\alpha}(x,y)$. This has been done already in \cite{fffp}. For the sake of completeness we provide an alternative derivation.
To start with we consider the one-dimensional operators $h_m$ in $L^2(\R_+, r dr)$ associated with the closure of the quadratic form $Q_m$ given by equation \eqref{qm}.

\begin{lemma} \label{bessel}
Let $f\in L^2(\R_+, r dr)$ be compactly supported. Then for all $t>0$ we have
\begin{equation} \label{bessel-eq}
\big( e^{-i t \, h_m} f\big) (r)  = \frac{1}{2 i t}\, \, \int_0^\infty
I_{|m+\alpha|}\left(\frac{r r'}{2 i t}\right)\,
e^{-\frac{r^2+r'^2}{4it}}\, f(r')\, r'\, dr'.
\end{equation}
where $I_{|m+\alpha|}$ is the modified Bessel function of the first kind.
\end{lemma}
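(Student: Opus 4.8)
The plan is to diagonalise $h_m$ by a Hankel transform, evaluate the resulting spectral integral by means of Weber's second exponential integral, and then recover the unitary group by analytic continuation in the time parameter. First I would set $\nu = |m+\alpha|$ and recall that $h_m$ acts as $-\partial_r^2 - r^{-1}\partial_r + \nu^2 r^{-2}$ in $L^2(\R_+, r\,dr)$, its Friedrichs extension being the self-adjoint realisation whose generalised eigenfunctions are the regular solutions $J_\nu(r\sqrt{\lambda})$ --- precisely the solutions $\psi(\lambda,\cdot\,)$ singled out in Section \ref{sec-aux}. Consequently $h_m$ is diagonalised by the Hankel transform of order $\nu$,
$$
(\mathcal{H}_\nu f)(k) = \int_0^\infty J_\nu(rk)\, f(r)\, r\, dr,
$$
which is a unitary involution on $L^2(\R_+, r\,dr)$ intertwining $h_m$ with multiplication by $k^2$. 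Hence for every $z$ with $\re z > 0$ the functional calculus gives $e^{-z h_m} f = \mathcal{H}_\nu\big(e^{-zk^2}\, \mathcal{H}_\nu f\big)$; inserting the definition of $\mathcal{H}_\nu$ and applying Fubini (legitimate for compactly supported $f$ and $\re z > 0$, where the Gaussian forces absolute convergence) exhibits $e^{-zh_m}$ as the integral operator with kernel $\int_0^\infty J_\nu(rk)\, J_\nu(r'k)\, e^{-zk^2}\, k\, dk$.

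The second step is to evaluate this $k$-integral explicitly. For $\re z > 0$ this is exactly Weber's second exponential integral (see Watson's treatise, \S13.31), which gives
$$
\int_0^\infty J_\nu(rk)\, J_\nu(r'k)\, e^{-zk^2}\, k\, dk = \frac{1}{2z}\, e^{-\frac{r^2+r'^2}{4z}}\, I_\nu\!\left(\frac{rr'}{2z}\right),
$$
valid for all $\re z > 0$ and $\nu > -1$. Thus \eqref{bessel-eq} holds with $it$ replaced by any $z$ with $\re z > 0$.

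Finally I would recover the unitary group by letting $z = \eps + it \to it$ as $\eps \to 0+$. On the operator side $e^{-z h_m} f \to e^{-it h_m} f$ in $L^2(\R_+, r\,dr)$, since for $h_m \geq 0$ the map $z \mapsto e^{-z h_m} f$ is continuous on $\{\re z \geq 0\}$ by dominated convergence in the spectral measure. On the kernel side, for fixed $r$ and compactly supported $f$ the integrand converges pointwise and is dominated by an integrable function: indeed $\re(1/z) = \eps/|z|^2 \geq 0$ forces $|e^{-(r^2+r'^2)/(4z)}| \leq 1$, the factor $1/(2z)$ stays bounded away from $z = 0$, and $I_\nu(rr'/(2z))$ is bounded on the relevant compact set of arguments and converges to $I_\nu(rr'/(2it))$. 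Dominated convergence then identifies the pointwise limit of the right-hand side with the claimed integral, and matching the two limits (passing to an a.e.\ convergent subsequence on the $L^2$ side) yields \eqref{bessel-eq}. The hard part will be exactly this boundary passage: the limiting $k$-integral is merely oscillatory and Weber's identity applies only in the open half-plane $\re z > 0$, so one must lean on the compact support of $f$ and on the purely imaginary argument of $I_\nu$ at $z = it$ to keep the limit under control; away from that, the computation is routine.
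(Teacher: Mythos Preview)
Your proposal is correct and follows essentially the same route as the paper: diagonalise $h_m$ by a Hankel-type transform, evaluate the resulting spectral integral via the Weber--Schafheitlin/Weber exponential identity (the paper cites Erd\'elyi's tables for the same formula), and then pass from $z=\eps+it$ to $z=it$ by dominated convergence using the compact support of $f$. The only cosmetic differences are that the paper first conjugates $h_m$ to $L^2(\R_+,dr)$ via $f\mapsto r^{1/2}f$ and uses the spectral parameter $p=k^2$, and that it makes the dominating function explicit through the integral representation \eqref{int-repr} of $I_\nu$ rather than invoking boundedness on compacta.
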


\begin{proof} Recall the integral representation
\begin{equation} \label{int-repr}
I_\nu(z)= \frac{z^\nu}{2^\nu\, \Gamma(\nu+\frac 12)\Gamma(\frac 12)}\,
\int_{-1}^1\, (1-s^2)^{\nu-\frac 12}\, e^{zs}\, ds, \qquad z\in\C,
\end{equation}
see \cite[eq.9.6.18]{as}. Consider now the operator
\begin{equation} \label{lbeta}
L_m  = U\, h_m\, U^{-1} \qquad \text{in \, \, \, } L^2(\R_+,
dr),
\end{equation}
where $U: L^2(\R_+, r\, dr)\to L^2(\R_+, dr)$ is the unitary mapping
acting as $(U f)(r) = r^{1/2} f(r)$. Note that $L_m $ is subject
to Dirichlet boundary condition at zero and that it coincides with
the Friedrichs extension of the differential operator
$$
-\frac{d^2}{dr^2}\, +\,  \frac{\mu^2-\frac 14}{r^2}
$$
defined on $C_0^\infty(\R_+)$. From \cite[Sect. 5]{k11} we know that
\begin{equation} \label{diag}
W_m \, L_m \, W_m ^{-1}\, \varphi(p) = p\,
\varphi(p), \quad \varphi\in W_m (D(L_m )),
\end{equation}
where the mappings $W_m ,\, W_m ^{-1}: L^2(\R_+) \to L^2(\R_+)$ given
by
\begin{equation} \label{fb}
(W_m \, u)(p) = \int_0^\infty u(r)\sqrt{r}\,
J_{|m+\alpha|}(r\sqrt{p})\, dr, \quad (W_m ^{-1} \varphi)(r) =
\frac 12\, \int_0^\infty \varphi(p)\sqrt{r}\,
J_{|m+\alpha|}(r\sqrt{p})\, dp,
\end{equation}
and defined initially on $C_0^\infty(\R_+)$ extend to unitary operators
on $L^2(\R_+)$. By \cite[Thm.3.1]{T}
\begin{equation} \label{limit}
e^{-i t L_m }\, g  = \lim_{\eps\to 0+} e^{-(\eps+i t)\, L_m }\, g, \qquad g = U f.
\end{equation}
In view of \eqref{diag} we thus get
\begin{align}
 \lim_{\eps\to 0+} \big( e^{-(\eps+i t) L_m }\, g\big)(r) & =  \lim_{\eps\to 0+}  \big( W_m ^{-1}\, e^{-(\eps+i t)
p}\, W_m \, g\big)(r) \nonumber \\
& = \lim_{\eps\to 0+}  \frac 12\, \int_0^\infty \sqrt{r r'}
\int_0^\infty e^{-(\eps+i t) p}  J_{|m+\alpha|}(r\sqrt{p})
J_{|m+\alpha|}(r'\sqrt{p})\,  dp\,  g(r')
\, dr' \nonumber \\
& = \lim_{\eps\to 0+} \frac{1}{2 (\eps+i t) }\, \int_0^\infty \sqrt{r r'}\, \,
I_{|m+\alpha|}\Big(\frac{r r'}{2(\eps+i t)}\Big)\,
e^{-\frac{r^2+r'^2}{4(\eps+i t)}}\, g(r')\, dr'.  \label{epsilon}
\end{align}
where we have used \cite[Eq.4.14(39)]{erde} to evaluate the
$p-$integral. Moreover, from \eqref{int-repr} it follows that
\begin{equation} \label{I-estim}
\Big | I_{|m+\alpha|}\Big(\frac{r r'}{2(\eps+i t)}\Big ) \, e^{-\frac{r^2+r'^2}{4(\eps+i t)}} \Big | \leq \, C\ \Big | \frac{r r' }{t }\Big |^{|m+\alpha|}
\end{equation}
for some constant $C >0$. On the other hand, by assumptions on $f$ we have $ (r')^{\frac 12+|m+\alpha|} g(r') =  (r')^{1+|m+\alpha|} f(r') \in L^1(\R_+)$. Hence using the Lebesgue dominated theorem and \eqref{I-estim} we can interchange
the limit with the integral in \eqref{epsilon} and use \eqref{limit} to get
\begin{align*}
 \big( e^{-i t L_m }\, g\big) (r) & = \frac{1}{2 i t }\, \int_0^\infty \sqrt{r r'}\, \,
I_{|m+\alpha|}\left(\frac{r r'}{2 i t}\right)\,
e^{-\frac{r^2+r'^2}{4 i t}}\, g(r')\, dr'.
\end{align*}
Equation \eqref{bessel-eq} now follows in view of \eqref{lbeta} and the identity $g(r') = \sqrt{r'}\, f(r')$.
\end{proof}

\noindent Now let $u\in L^2(\R^2)$ have compact support.
Since the projector $\Pi_m$ commutes with $h_m\otimes \mbox{id}$, see section
\ref{ss:pwave}, from \eqref{sum-gen} and \eqref{bessel-eq} we obtain
\begin{equation} \label{hk-gen}
\big(e^{-i\, tH_\alpha}\, u\big) (r,\theta) =   \frac{1}{4 \pi i t}\, \sum_{m\in\Z}\,  \int_0^\infty\, \int_0^{2\pi}
I_{|m+\alpha|}\left(\frac{r r'}{2 i t}\right)\, e^{im(\theta-\theta')}\,
e^{-\frac{r^2+r'^2}{4it}}\, u(r',\theta')\, r'\, dr'\, d\theta'.
\end{equation}
Moreover, in view of the integral representation \eqref{int-repr} the series
$
\sum_{m\in\Z}\,
I_{|m+\alpha|}\big(\frac{r\, r'}{2 i t}\big)\, e^{im(\theta-\theta')}
$
converges, for a fixed $r$, uniformly in $r'$ on compacts of $\R_+$. We can thus interchange the sum and the integral in \eqref{hk-gen} to conclude that \eqref{L2-kernel} holds true with
\begin{equation} \label{ab-prop}
e^{-i\, t H_\alpha}(x,y)=  \frac{1}{4 \pi i\, t}\, e^{-\frac{r^2+r'^2}{4it}} \, \sum_{m\in\Z}\,
I_{|m+\alpha|}\left(\frac{r r'}{2 i t}\right)\, e^{im(\theta-\theta')}\, .
\end{equation}
Assume now that $|\alpha| < 1/2$. With the help of \eqref{int-repr} it is easily seen that
$$
\lim_{t\to\infty}\, t^{|\alpha|}\, e^{-\frac{r^2+r'^2}{4it}} \, \sum_{m\neq 0}\,
I_{|m+\alpha|} \left(\frac{r r'}{2 i t}\right)\, e^{im(\theta-\theta')} = 0.
$$
Equation \eqref{limit-1} now follows from the identity
$$
\lim_{z\to 0} z^{-\nu}\,  I_\nu(z) = \frac{1}{2^\nu\, \Gamma(1+\nu)},
$$
see \cite[eq. 9.6.10]{as}. The proof for $|\alpha| =1/2$ follows the same line. To prove \eqref{point-upperb} we recall that by \cite[Corollary 1.7]{fffp} there exists a constant $C_0$ such that
\begin{equation} \label{diamag}
\sup_{x,y\in\R^2} | e^{-i\, t H_\alpha}(x,y) |\, \leq \, \frac{C_0}{t} \qquad \forall\ t >0.
\end{equation}
Let us define
$$
\Omega_1 := \{(x,y)\in\R^4\, :\,  |x|\, |y| < t\} ,\qquad  \Omega_2 := \{ (x,y)\in\R^4\, :\,  |x|\, |y| \geq  t\}, \qquad z: = \frac{|x|\, |y|}{t}.
$$
Since $|\alpha| \leq 1/2$, from \eqref{diamag}, \eqref{ab-prop} and \eqref{int-repr} we deduce that
\begin{align*}
& \sup_{(x,y)\in\R^4} \Big( \frac{t}{|x|\, |y|} \Big)^{|\alpha|}\, |\, e^{-i\, t H_\alpha}(x,y) |  \\
& \qquad = \max\Big\{ \sup_{(x,y)\in\Omega_1} \Big( \frac{t}{|x|\, |y|} \Big)^{|\alpha|}\, |e^{-i\, t H_\alpha}(x,y) |\, ,\, \sup_{(x,y)\in\Omega_2} \Big( \frac{t}{|x|\, |y|} \Big)^{|\alpha|}\, |e^{-i\, t H_\alpha}(x,y) | \Big\} \\
& \qquad \leq\, t^{-1}\, \max\Big\{ C_1\, \sup_{0<z <1} \, \sum_{m\in\Z} \frac{z^{|m+\alpha|-{|\alpha|}}}{2^{|m+\alpha|}\, \Gamma(|m+\alpha|+1/2)}\ , \ C_0 \Big\}  = C_2\, t^{-1},
\end{align*}
where $C_1$ and $C_2$ are positive constants. This implies \eqref{point-upperb} and completes the proof of Theorem \ref{thm-1}.

\smallskip

\begin{remark}
By using the relation $I_\nu (z) = e^{-\frac 12\, i \pi z }\, J_\nu(i z)$,
see \cite[eq.9.6.3]{as}, it is easily seen that equation \eqref{ab-prop} agrees with the expression for the Aharonov-Bohm propagator found  in \cite[Thm. 1.3]{fffp}.
\end{remark}

\medskip

%%%%%%%%%%%%%%%%%%%%%%%%%%%%%%%%%%%%%%%%%%%%%%%%%%%%

\appendix

\section{}
\label{app}

\noindent For reader's convenience we recall below the results obtained in \cite[Lemma 10.1]{JK}  and  \cite[Lemma 10.2]{JK} regarding a function $F:\R_+ \to \BB$, where $\BB$ is an arbitrary Banach space.

\begin{lemma}[Jensen-Kato] \label{lem-jk1}
Suppose that $F(\lambda)=0$ in a neighborhood of zero and that $F'' \in L^1(\R_+; \BB)$. Then
$$
 \int_0^\infty\, e^{-i t \lambda} \, F(\lambda)\, d\lambda = o(t^{-2}) \qquad \text{as} \quad t\to \infty \quad \text{in} \ \  \BB.
$$
\end{lemma}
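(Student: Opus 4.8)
The plan is to reduce the claim to the vector-valued Riemann--Lebesgue lemma by integrating by parts twice in $\lambda$. Since $F$ vanishes on a neighborhood $[0,\delta]$ of the origin, the same is true of $F'$ and $F''$, so all three functions are supported in $[\delta,\infty)$ and no boundary contribution arises at $\lambda=0$. The oscillatory factor $e^{-it\lambda}$ can be traded for derivatives of $F$ at the cost of a factor $1/(it)$ each time, which is exactly what produces the gain $t^{-2}$.

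Concretely, first I would write
\[
\int_0^\infty e^{-it\lambda}\, F(\lambda)\, d\lambda = \int_\delta^\infty e^{-it\lambda}\, F(\lambda)\, d\lambda,
\]
and integrate by parts using $e^{-it\lambda} = \tfrac{i}{t}\,\tfrac{d}{d\lambda} e^{-it\lambda}$. The boundary term at $\lambda=\delta$ vanishes because $F(\delta)=0$, and the term at $\lambda=\infty$ vanishes because $F$ decays; this leaves $\tfrac{1}{it}\int_\delta^\infty e^{-it\lambda} F'(\lambda)\, d\lambda$. A second integration by parts, with the boundary terms again killed by $F'(\delta)=0$ and the decay of $F'$, yields the identity
\[
\int_0^\infty e^{-it\lambda}\, F(\lambda)\, d\lambda = -\frac{1}{t^2}\int_0^\infty e^{-it\lambda}\, F''(\lambda)\, d\lambda,
\]
where I have used that $F''$ too is supported in $[\delta,\infty)$ to restore the lower limit $0$.

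It then remains to show that the remaining integral tends to $0$ in $\BB$ as $t\to\infty$, i.e. a Riemann--Lebesgue statement for the $\BB$-valued function $F''\in L^1(\R_+;\BB)$. This I would prove by density: for a step function $\sum_j \mathbf{1}_{[a_j,b_j]}\, v_j$ with $v_j\in\BB$ the integral is explicitly $\sum_j \tfrac{e^{-ita_j}-e^{-itb_j}}{it}\, v_j = \mathcal{O}(t^{-1})\to 0$, and an arbitrary $F''\in L^1(\R_+;\BB)$ is approximated in $L^1$-norm by such step functions, the error being controlled uniformly in $t$ by the $L^1(\R_+;\BB)$-norm of the approximant. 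Hence $\int_0^\infty e^{-it\lambda} F''(\lambda)\, d\lambda = o(1)$ and the integral in the display above is $o(t^{-2})$, as claimed.

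The step I expect to require the most care is the justification of the vanishing boundary terms at $\lambda=\infty$: integrability of $F''$ only guarantees that $F'$ converges at infinity, and one must argue --- using that the original oscillatory integral is well defined in the sense relevant to the application --- that this limit, as well as the limit of $F$, is in fact $0$, so that $\tfrac{1}{it}\, e^{-it\lambda} F(\lambda)$ and $\tfrac{1}{t^2}\, e^{-it\lambda} F'(\lambda)$ drop at the upper endpoint. Everything else is the standard twofold integration by parts combined with the Bochner version of the Riemann--Lebesgue lemma.
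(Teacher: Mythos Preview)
The paper does not supply its own proof of this lemma; it merely recalls the statement from \cite[Lemma 10.1]{JK} for the reader's convenience. Your argument --- two integrations by parts followed by the Bochner Riemann--Lebesgue lemma, with the delicate point being the vanishing of $F$ and $F'$ at infinity --- is precisely the standard proof, and your flagging of the boundary-term issue (which in the applications of this paper is handled by the explicit decay estimates in Proposition~\ref{prop-high}) is accurate.
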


\medskip

\begin{lemma}[Jensen-Kato] \label{lem-jk2}
Suppose that $F(0)=0$, $F(\lambda)=0$ for $\lambda$ large enough and that $F'' \in L^1((\delta,\infty); \BB)$ for any $\delta >0$. Assume moreover that  $F''(\lambda) = o(\lambda^{\beta-2})$ as $\lambda \to 0$ for some $\beta\in (0,1)$ . Then
$$
 \int_0^\infty\, e^{-i t \lambda} \, F(\lambda)\, d\lambda = o(t^{-1-\beta}) \qquad \text{as} \quad t\to \infty \quad \text{in} \ \  \BB.
$$
\end{lemma}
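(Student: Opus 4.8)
The plan is to bootstrap the single hypothesis on $F''$ into matching pointwise bounds on $F$ and $F'$ near the origin, and then to estimate the oscillatory integral $I(t):=\int_0^\infty e^{-it\lambda}F(\lambda)\,d\lambda$ by cutting the $\lambda$-range at the natural scale $\lambda=1/t$. First I would integrate the assumption $F''(\lambda)=o(\lambda^{\beta-2})$. Since $\beta\in(0,1)$ the exponent $\beta-1$ lies in $(-1,0)$, so $\lambda^{\beta-1}$ is integrable at $0$ while $\lambda^{\beta-2}$ is not. For $0<\lambda<\delta$ the identity $F'(\lambda)=F'(\delta)-\int_\lambda^\delta F''(s)\,ds$, valid because $F''\in L^1((\lambda,\delta))$, together with $\int_\lambda^\delta s^{\beta-2}\,ds\le\frac{1}{1-\beta}\lambda^{\beta-1}$, gives $F'(\lambda)=o(\lambda^{\beta-1})$; integrating once more and using $F(0)=0$ yields $F(\lambda)=o(\lambda^{\beta})$ as $\lambda\to0$. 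These three matching asymptotics, $F=o(\lambda^\beta)$, $F'=o(\lambda^{\beta-1})$, $F''=o(\lambda^{\beta-2})$, are the only inputs needed below.

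Next I would split $I(t)=\int_0^{1/t}+\int_{1/t}^\infty$. For the inner piece the trivial bound $\big\|\int_0^{1/t}e^{-it\lambda}F(\lambda)\,d\lambda\big\|\le\int_0^{1/t}\|F(\lambda)\|\,d\lambda$, combined with $\|F(\lambda)\|\le\varepsilon\lambda^\beta$ for small $\lambda$, yields a contribution $\le\frac{\varepsilon}{1+\beta}\,t^{-1-\beta}$, hence $o(t^{-1-\beta})$. For the outer piece I would integrate by parts twice; this is legitimate because $F'$ is absolutely continuous on $[1/t,\infty)$ (where $F''\in L^1$), and because $F$ has compact support the boundary terms at $+\infty$ vanish. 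The two surviving boundary terms are $\tfrac{e^{-i}}{it}F(1/t)$ and $\tfrac{e^{-i}}{(it)^2}F'(1/t)$; by the bounds $F=o(\lambda^\beta)$ and $F'=o(\lambda^{\beta-1})$ these are $o(t^{-1}\,t^{-\beta})$ and $o(t^{-2}\,t^{1-\beta})$ respectively, i.e.\ both $o(t^{-1-\beta})$.

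It remains to control the doubly-integrated remainder $\tfrac{1}{(it)^2}\int_{1/t}^\infty e^{-it\lambda}F''(\lambda)\,d\lambda$, for which I must show the integral is $o(t^{1-\beta})$. Here I would fix a small $\delta_0>0$ and split once more: on $[\delta_0,\infty)$ the bound $\int_{\delta_0}^\infty\|F''\|\,d\lambda<\infty$ gives an $O(1)$ contribution, which is $o(t^{1-\beta})$ since $1-\beta>0$; on $[1/t,\delta_0]$, using $\|F''(\lambda)\|\le\varepsilon\lambda^{\beta-2}$ and $\int_{1/t}^{\delta_0}\lambda^{\beta-2}\,d\lambda\le\frac{1}{1-\beta}\,t^{1-\beta}$, one gets a contribution $\le\frac{\varepsilon}{1-\beta}\,t^{1-\beta}$. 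Letting $\varepsilon\to0$, with $\delta_0=\delta_0(\varepsilon)$ shrinking accordingly, shows this piece is genuinely $o(t^{1-\beta})$, so the remainder is $o(t^{-1-\beta})$, and summing the four pieces proves the claim. The main obstacle is precisely that $F''$ fails to be integrable at the origin when $\beta<1$, which forbids integrating by parts all the way down to $0$; the cutoff at $\lambda=1/t$ is the calibrated compromise that keeps the inner $L^1$-bound and the boundary and remainder terms all of the same order, and the only real care needed is to route the $\varepsilon$--$\delta_0$ quantifiers so that every error is honestly little-$o$ rather than big-$O$ of $t^{-1-\beta}$.
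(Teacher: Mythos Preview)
The paper does not supply its own proof of this lemma; it is quoted in the appendix purely for the reader's convenience, with the proof deferred to \cite[Lemma 10.2]{JK}. Your argument is correct and is essentially the Jensen--Kato proof: integrate the hypothesis $F''=o(\lambda^{\beta-2})$ to obtain the matching bounds $F'=o(\lambda^{\beta-1})$ and $F=o(\lambda^{\beta})$, split the oscillatory integral at the scale $\lambda=1/t$, bound the inner piece in $L^1$, and integrate the outer piece by parts twice, controlling the boundary terms and the $F''$-remainder with the same little-$o$ bounds.
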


\medskip

\begin{remark}
\cite{JK} provides more general versions of the above results. Here we limit ourselves to particular situations which suit our purposes.
\end{remark}

%%%%%%%%%%%%%%%%%%%%%%%%%%%%%%%%%%%%%%%%%%%%%%%%%%%%%%%%

%%%%%%%%%%%%%%%%%%%%%%%%%%%
%\section*{\bf Acknowledgements}
%G.G. has been  supported by the MIUR-PRINÕ08 grant for the project  '' Metodi di viscosit\`a, geometrici e di controllo per modelli diffusivi nonlineari''.
%H.K. has been supported by the MIUR-PRINÕ08 grant for the project  ''Trasporto ottimo di massa, disuguaglianze geometriche e funzionali e applicazioni''.

%%%%%%%%%%%%%%%%%%%%%%%%%%%%%%%%%%%%%%%%%%%%%%%%%%%%

\medskip

\bibliographystyle{amsalpha}

\end{document}